%


%
\documentclass[a4paper]{amsart}
%


%
\usepackage{color}

\usepackage{amsmath,amssymb,amsfonts}
\usepackage{amsthm}
\usepackage{stmaryrd}

%


%
%


%
%


%
\newtheorem{theorem}{Theorem}[section]
\newtheorem{lemma}[theorem]{Lemma}
\newtheorem{proposition}[theorem]{Proposition}

\theoremstyle{definition}
\newtheorem{definition}[theorem]{Definition}
\newtheorem{assumption}[theorem]{Assumption}
\newtheorem{remark}[theorem]{Remark}
%


%
\begin{document}
\setlength\arraycolsep{2pt}
\title[Almost Periodic Solutions of fDE with Statistical Application]{Almost Periodic and Periodic Solutions of Differential Equations Driven by the Fractional Brownian Motion with Statistical Application}
\author{Nicolas MARIE*}
\address{*Laboratoire Modal'X, Universit\'e Paris Nanterre, Nanterre, France}
\email{nmarie@parisnanterre.fr}
\address{*ESME Sudria, Paris, France}
\email{nicolas.marie@esme.fr}
\author{Paul RAYNAUD DE FITTE$^{\dag}$}
\address{$^{\dag}$LMRS, Universit\'e de Rouen Normandie, Rouen, France}
\email{prf@univ-rouen.fr}
\date{}


%
\begin{abstract}
We show that the unique solution to a semilinear stochastic
differential equation with almost periodic coefficients
driven by a fractional Brownian motion is almost periodic in a sense
related to random dynamical systems. This type of almost periodicity
allows for the construction of a 
consistent estimator of the drift
parameter in the almost periodic and periodic cases. 
\end{abstract}
\maketitle
\noindent
\tableofcontents
%


%
\section{Introduction}\label{section_introduction}
Since its introduction by Harald Bohr in the 1920s, the notion of
almost periodicity has found many applications in the qualitative 
study of ordinary differential equations and dynamical systems, and
many generalisations have been proposed and applied: almost periodicity in the
sense of Stepanov, or Weyl, or Besicovitch, almost automorphy, asymptotic
almost periodicity, etc. See Andres et al.~\cite{andres-bersani} for a
survey and a comparison of some of these notions.
\\
The application of almost periodicity to stochastic
differential equations in the framework of It\^o calculus
seems to start in the 1980s
with the Romanian school,
in a series of papers by Constantin Tudor and his collaborators:
\cite{DaPrato-Tudor95,halanay87,Morozan-Tudor89,Tudor92affine,Tudor92flows},
to cite but a few.  
Each known notion of almost periodicity for deterministic functions 
forks into several possible definitions for stochastic processes,
mainly: almost periodicity in distribution (in various senses), in
probability, or in square mean,  
see the surveys by Tudor \cite{Tudor95ap_processes}
and Bedouhene et al.~\cite{BMRF}.
However, almost periodicity in probability or in square mean appeared to be
inapplicable to stochastic differential equations,  
see \cite{almostautomorphy,counterexamples}. 
Recently, a new definition of almost periodicity for stochastic
processes has been introduced
in Zhang and Zheng \cite{zhang-zheng19ap} and 
Raynaud de Fitte \cite{RF20}, namely $\theta$-almost periodicity,
where $\theta$ is the Wiener shift.
One motivation of \cite{RF20} was to circumvent the limitations of
``plain'' almost periodicity in square mean by introducing the
action of a group $\theta$ of measure preserving transformations on the
underlying probality space.
\\
This paper is devoted to $\theta$-almost periodicity (in Bohr's sense)
in square mean and statistical estimation
for solutions to stochastic differential equations driven by a
fractional Brownina motion with Hurst index greater than $1/2$.
The paper is organized as follows. 
\\
We present $\theta$-almost periodicity in Section
\ref{section_preliminaries}, along with some preliminaries on
stochastic integration with respect to fractional Brownian motion. 
\\
In Section
\ref{section_ap_solutions}, 
we prove the existence and uniqueness of a $\theta$-almost periodic
in square mean (resp.~$\theta$-periodic) solution to
\begin{equation}\label{main_equation}
dX(t) =
(AX(t) + b(t,X(t)))dt +\sigma(t)dB(t)
\textrm{ $;$ }
t\in\mathbb R
\end{equation}
where $A\in\mathcal M_d(\mathbb R)$ with $d\in\mathbb N^*$, $b
:\mathbb R\times\mathbb R^d\rightarrow\mathbb R^d$ and $\sigma
:\mathbb R\rightarrow\mathcal M_d(\mathbb R)$ are continuous
functions, $B$ is a $d$-dimensional two-sided fractional Brownian
motion (fBm) of Hurst index $H\in ]1/2,1[$, and 
the functions $t\mapsto b(t,x)$, $x\in\mathbb R^d$ and $\sigma$ are
assumed to be almost periodic (resp.~periodic).
We also show, in Remark \ref{rem:counterexample}, that
``plain'' almost periodicity in square mean is inapplicable 
to stochastic equations driven by fractional Brownian motion, 
despite some papers claiming the existence of nontrivial almost periodic
solutions in square mean. 
\\
Section \ref{section_parameter_estimation} is devoted to parametric
estimation, using some features of $\theta$-almost periodicity. 
Along the last two decades, many authors investigated statistical inference in differential equations driven by fractional Brownian motion (fDE). Most references on the estimation of the trend component in fDE deal with parametric estimators under a dissipativity condition on the drift function ensuring the existence and uniqueness of a stationary solution (see Kleptsyna and Le Breton \cite{KB01}, Tudor and Viens \cite{TV07}, Hu and Nualart \cite{HN10}, Neuenkirch and Tindel \cite{NT14}, Hu et al.~\cite{HNZ18}, etc.). Some recent papers deal with parametric estimators of the drift parameter in the fractional Langevin equation with periodic mean (see Dehling et al.~\cite{DFW17} and Bajja et al.~\cite{BEV17}). Having in mind these two research fields, for $d = 1$, Section \ref{section_parameter_estimation} deals with the convergence of a Skorokhod's integral based least-square type estimator, similar to those of Hu et al.~\cite{HNZ18}, of the parameter $\vartheta > 0$ in
\begin{equation}\label{main_equation_estimation}
dX(t) =
-\vartheta(X(t) - b_0(t,X(t)))dt +\sigma(t)dB(t)
\textrm{ $;$ }
t\in\mathbb R,
\end{equation}
where the function $b_0 :\mathbb R^2\rightarrow\mathbb R$ is
continuous and $t\mapsto b_0(t,x)$ is almost periodic for every
$x\in\mathbb R$.
As with stationarity in Hu et al.~\cite{HNZ18}, the periodicity or
almost periodicity of the
solution to Equation (\ref{main_equation_estimation}) under the
conditions of Section \ref{section_parameter_estimation} allows to
prove the consistency of the mentioned least-square type estimator of
$\vartheta$. To our knowledge, this problem has not yet been
investigated, even for periodic diffusion processes. 
\\
\\
\textbf{Notations and basic properties:}
\begin{enumerate}
 \item For every $s,t\in\mathbb R$ such that $s < t$, $\Delta_{s,t} :=\{(u,v)\in [s,t]^2 : u < v\}$.
 \item For every function $f$ from $\mathbb R$ into $\mathbb R^d$ and $(s,t)\in\mathbb R^2$, $f(s,t) := f(t) - f(s)$.
 \item Consider a real interval $I$. The vector space of continuous functions from $I$ into $\mathbb R^d$ is denoted by $C^0(I,\mathbb R^d)$ and equipped with the uniform norm $\|.\|_{\infty,I}$ defined by
 \begin{displaymath}
 \|f\|_{\infty,I} :=
 \sup_{u\in I}\|f(u)\|
 \textrm{ $;$ }
 \forall f\in C^0(I,\mathbb R^d).
 \end{displaymath}
 In particular, $\|.\|_{\infty,s,t} :=\|.\|_{\infty,[s,t]}$ for every $s,t\in\mathbb R$ such that $s < t$.
 \item Consider $s,t\in\mathbb R$ such that $s < t$. The set of all dissections of $[s,t]$ is denoted by $\mathfrak D_{[s,t]}$.
 \item Consider $n\in\mathbb N^*$. The vector space of infinitely continuously differentiable maps $f :\mathbb R^n\rightarrow\mathbb R$ such that $f$ and all its partial derivatives have polynomial growth is denoted by $C_{\mathbf p}^{\infty}(\mathbb R^n;\mathbb R)$.
 \item Consider a probability space $(\Omega,\mathcal A,\mathbb P)$. Let $\mathbb L^0(\Omega;\mathbb R^d)$ be the space of equivalence classes, for the almost everywhere equality, of measurable mappings from $\Omega$ into $\mathbb R^d$. For every $p\geqslant 1$, the usual distance on $\mathbb L^p(\Omega;\mathbb R^d)$ is denoted by $d_p$.
\end{enumerate}
%


%
\section{Preliminaries}\label{section_preliminaries}
This section provides some preliminary material on almost periodicity
and on stochastic integrals with respect to fractional Brownian
motion. 
%


%
\subsection{Almost periodic functions and $\theta$-almost periodic processes}
This subsection deals with almost periodic functions and almost periodic processes with respect to a metric dynamical system.
%


%
\begin{definition}\label{almost_periodic_functions}
\begin{enumerate}
 \item A set $A\subset\mathbb R$ is \emph{relatively dense} if, for every $\varepsilon > 0$, there exists $l > 0$ such that every interval of length $l$ has a nonempty intersection with $A$.
 \item Let $f :\mathbb R\rightarrow\mathbb R^d$ be a continuous function. For any $\varepsilon > 0$, $\tau > 0$ is an \emph{$\varepsilon$-almost period} of $f$ if
 \begin{displaymath}
 \forall t\in\mathbb R
 \textrm{, }
 \|f(t +\tau) - f(t)\|\leqslant\varepsilon.
 \end{displaymath}
 \item A continuous function $f :\mathbb R\rightarrow\mathbb R^d$ is \emph{almost periodic} (in Bohr's sense) if, for every $\varepsilon > 0$, the set of its $\varepsilon$-almost periods is relatively dense.

 \item A continuous function $f :\mathbb R\times\mathbb
   R^d\rightarrow\mathbb R^d$ is \emph{almost periodic uniformly with
     respect to compact subsets of $\mathbb R^d$} if, for every
   compact subset $K$ of $\mathbb R^d$, the map 
 \begin{displaymath}
 t\in\mathbb R\longmapsto
 f(t,.)_{|K}
 \end{displaymath}
 is almost periodic. 
\end{enumerate}
\end{definition}
\noindent
Now, let us state the mean value theorem and Parseval's equality for almost periodic functions. These results are proved in Levitan and Zhikov \cite{LZ82}, Chapter 2. The reader can also refer to Corduneanu \cite{corduneanu}.
%


%
\begin{proposition}\label{mean_value_ap_functions}
For every almost periodic function $f :\mathbb R\rightarrow\mathbb C$, its mean value
\begin{displaymath}
\mathcal M(f) :=
\lim_{t\rightarrow\infty}
\frac{1}{t}\int_{0}^{t}f(s)ds
\end{displaymath}
exists.
\end{proposition}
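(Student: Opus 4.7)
The plan is to show that $M_T := \frac{1}{T}\int_0^T f(s)\,ds$ is a Cauchy family as $T\to\infty$, exploiting the relative density of $\varepsilon$-almost periods to establish approximate translation invariance. As a preliminary observation, every almost periodic function is bounded: applying the definition with $\varepsilon=1$ produces $l>0$ such that every $t\in\mathbb R$ can be shifted into $[0,l]$ by an almost period (using that negative almost periods also satisfy the shift inequality by symmetry of the defining condition), so continuity yields $M :=\|f\|_\infty <\infty$.

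The key step is the translation-invariance estimate: for every $\varepsilon>0$, every $a\in\mathbb R$, and every $T>0$,
\begin{equation*}
\left|\frac{1}{T}\int_a^{a+T}f(s)\,ds - M_T\right|\le\varepsilon+\frac{2Ml}{T},
\end{equation*}
where $l=l(\varepsilon)$ is the corresponding relative density constant. I would prove it by picking an $\varepsilon$-almost period $\tau$ with $a+\tau\in[0,l]$, performing the change of variable $\int_a^{a+T}f(s)\,ds = \int_{a+\tau}^{a+\tau+T}f(r-\tau)\,dr$, and splitting the difference with $\int_0^T f$ into a bulk part bounded by $\varepsilon T$ (via $|f(\cdot-\tau)-f(\cdot)|\le\varepsilon$) plus a boundary contribution over two intervals of total length at most $2l$, bounded by $2Ml$.

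Given $\varepsilon>0$, I would then fix $T_0$ so large that $2Ml/T_0<\varepsilon$, and for $T_1\gg T_0$ set $n=\lfloor T_1/T_0\rfloor$ and partition $[0,nT_0]$ into $n$ consecutive intervals of length $T_0$. Applying the above estimate on each piece yields $\left|\int_{kT_0}^{(k+1)T_0}f - T_0 M_{T_0}\right|\le 2\varepsilon T_0$; summing these, absorbing the leftover $\int_{nT_0}^{T_1}f$ (of modulus at most $MT_0$), and dividing by $T_1$ gives
\begin{equation*}
|M_{T_1}-M_{T_0}|\le 2\varepsilon + \frac{2MT_0}{T_1}\le 3\varepsilon
\end{equation*}
for $T_1$ sufficiently large. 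Hence $\{M_T\}$ is Cauchy and $\mathcal M(f)$ exists; the complex-valued case reduces to the real one by working with real and imaginary parts.

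The principal conceptual content sits in the translation-invariance estimate, since this is where relative density of almost periods is used in an essential way. The remaining Cauchy step is just a partition-and-average bookkeeping argument, and I would not expect it to present any real obstacle beyond keeping track of the two error scales $\varepsilon$ and $Ml/T$.
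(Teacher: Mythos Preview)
Your argument is correct and is essentially the classical proof of the mean value theorem for Bohr almost periodic functions. Note, however, that the paper does not actually give its own proof of this proposition: it states the result and refers the reader to Levitan--Zhikov \cite{LZ82}, Chapter~2, and Corduneanu \cite{corduneanu}. The proof you outline---boundedness, the translation-invariance estimate $\bigl|\tfrac{1}{T}\int_a^{a+T}f - M_T\bigr|\le \varepsilon + 2Ml/T$ obtained by shifting via an $\varepsilon$-almost period $\tau$ with $a+\tau\in[0,l]$, and then the partition-and-average Cauchy argument---is precisely the standard textbook proof found in those references (see, e.g., Corduneanu, Theorem~1.12). So there is no discrepancy to discuss: you have supplied the proof the paper omits, by the expected route.

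One small remark on your boundedness step: the paper's Definition~\ref{almost_periodic_functions}(2) nominally restricts almost periods to $\tau>0$, but as you note, the defining inequality $\|f(\cdot+\tau)-f(\cdot)\|\le\varepsilon$ is symmetric under $\tau\mapsto-\tau$, so the set of almost periods is symmetric about $0$ and relative density in $\mathbb R$ follows; this justifies picking $\tau$ with $a+\tau\in[0,l]$ for arbitrary $a$.
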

%


%
\begin{proposition}\label{Parseval_equality_ap_functions}
For every almost periodic function $f :\mathbb R\rightarrow\mathbb C$, its spectrum
\begin{displaymath}
\mathbb S(f) :=
\{\lambda\in\mathbb R :\mathcal M(fe^{i\lambda .})\not= 0\}
\end{displaymath}
is at least countable and, for every sequence $(\lambda_n)_{n\in\mathbb N}$ of elements of $\mathbb S(f)$,
\begin{displaymath}
\sum_{n = 1}^{\infty}
|\mathcal M(fe^{i\lambda_n .})|^2 =
\mathcal M(|f|^2).
\end{displaymath}
\end{proposition}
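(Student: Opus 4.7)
My plan is to put a pre-Hilbert structure on the space of almost periodic $\mathbb{C}$-valued functions via the mean $\mathcal{M}$, observe that the characters form an orthonormal family for this structure, deduce at-most-countability of $\mathbb{S}(f)$ from Bessel's inequality, and invoke the Bochner--Fej\'er approximation theorem to promote Bessel to Parseval's equality.

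First I would note that the space of almost periodic functions $\mathbb{R}\to\mathbb{C}$ is closed under sums, products and complex conjugation, so Proposition \ref{mean_value_ap_functions} makes $\langle f, g\rangle := \mathcal{M}(f\bar g)$ a well-defined non-negative sesquilinear form. The elementary computation
\[
\frac{1}{T}\int_{0}^{T} e^{i\nu s}\,ds = \frac{e^{i\nu T} - 1}{i\nu T}
\]
shows that $\mathcal{M}(e^{i\nu\cdot}) = 0$ for $\nu\neq 0$ and equals $1$ for $\nu = 0$. Setting $e_\lambda(t):= e^{-i\lambda t}$, the characters satisfy $\langle e_\lambda, e_\mu\rangle = \delta_{\lambda,\mu}$, and the scalar $a(\lambda):= \mathcal{M}(fe^{i\lambda\cdot}) = \langle f, e_\lambda\rangle$ is precisely the Fourier coefficient of $f$ against $e_\lambda$.

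Applying Bessel's inequality to any finite collection of pairwise distinct reals $\lambda_1,\ldots,\lambda_N$ then yields $\sum_{k=1}^N|a(\lambda_k)|^2\leq \mathcal{M}(|f|^2)$. Consequently, for every integer $n\geq 1$ the set $\{\lambda\in\mathbb{R}:|a(\lambda)|^2 > 1/n\}$ has at most $n\,\mathcal{M}(|f|^2)$ elements, so $\mathbb{S}(f)$ is a countable union of finite sets, hence at most countable.

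The equality in Parseval is the delicate part, and also the main obstacle. I would invoke the Bochner--Fej\'er approximation theorem (Levitan--Zhikov, Chapter~2): there exists a sequence of trigonometric polynomials of the form $P_n(t) = \sum_{j=1}^{k_n} c_j^{(n)}\, e_{\lambda_j^{(n)}}(t)$ with $\lambda_j^{(n)}\in \mathbb{S}(f)$ such that $\|f - P_n\|_\infty\to 0$. Uniform convergence forces $\mathcal{M}(|f - P_n|^2)\to 0$, and since each $P_n$ lies in the linear span of the orthonormal family $(e_\lambda)_{\lambda\in\mathbb{S}(f)}$, the Pythagorean identity applied to the partial Fourier sums gives $\sum_n|a(\lambda_n)|^2 = \mathcal{M}(|f|^2)$. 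Bessel alone never yields equality; the nontrivial ingredient is precisely the existence of approximants whose frequencies are confined to $\mathbb{S}(f)$, obtained via an explicit summability kernel adapted to the spectrum. I would rely on Levitan--Zhikov for this construction rather than reproduce it.
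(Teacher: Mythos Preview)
The paper does not supply its own proof of this proposition: it simply states the result and refers the reader to Levitan and Zhikov \cite{LZ82}, Chapter~2. Your proposal is a correct sketch of the classical argument (pre-Hilbert structure via $\mathcal{M}$, orthonormality of the characters, Bessel for countability, Bochner--Fej\'er approximation for the equality), and you yourself defer the one nontrivial ingredient to the very same reference, so your approach is entirely consistent with what the paper does.
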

\noindent
Let $(\Omega,\mathcal A,\mathbb{P},\theta)$ be a metric dynamical
system (in the sense of Arnold \cite{ARNOLD98}), that is,
$(\Omega,\mathcal{A},\mathbb{P})$ is a probability space, and
$\theta=(\theta_t)_{t\in\mathbb{R}}$ is a group of measure preserving
transformations on $\Omega$, that is, each 
$\theta_t :\,\Omega\rightarrow\Omega$ is $\mathcal{A}$-measurable,
with 
$\mathbb{P}(\theta_t^{-1}(A)) =\mathbb{P}(A)$ for all $A\in\mathcal{A}$, and
$\theta_{s+t}=\theta_s\circ\theta_t$
for all $s,t\in\mathbb{R}$.  
%


%
\begin{definition}\label{translation}
The \emph{translation} of a continuous process $Y$ is the $C^0(\mathbb
R,\mathbb L^0(\Omega;\mathbb R^d))$-valued map $\mathfrak TY$ defined
by 
\begin{displaymath}
\mathfrak T_{\tau}Y(t,\omega) :=
Y(t +\tau,\theta_{-\tau}\omega)
\end{displaymath}
for every $\omega\in\Omega$ and $t,\tau\in\mathbb R$.
\end{definition}
%


%
\begin{definition}\label{almost periodic_processes}
Let $Y$ be a continuous process such that $Y(t)\in\mathbb L^p(\Omega;\mathbb R^d)$ for every $t\in\mathbb R$.
\begin{enumerate}
 \item For any $\varepsilon > 0$, $\tau > 0$ is a
   \emph{$\theta$-$\varepsilon$-almost period in $p$-mean} of $Y$ if 
 \begin{displaymath}
 \sup_{t\in\mathbb R}
 d_p(\mathfrak T_{\tau}Y(t),Y(t))\leqslant\varepsilon.
 \end{displaymath}
 \item The continuous process $Y$ is \emph{$\theta$-almost periodic in $p$-mean} if $(t,\tau)\mapsto\mathfrak T_{\tau}Y(t)$ is continuous for the distance $d_p$ and, for every $\varepsilon > 0$, the set of its $\theta$-$\varepsilon$-almost periods is relatively dense.
 \item The continuous process $Y$ is \emph{$\theta$-$\tau$-periodic}
   with $\tau > 0$ if $\mathfrak T_{\tau}Y = Y$. 
\end{enumerate}
\end{definition}
\noindent
The following proposition provides a compactness result which is crucial in the first step of the proof of Proposition \ref{almost_periodic_solutions}.
%


%
\begin{proposition}\label{compactness_ap_processes}
Consider a continuous process $Y$ and a compact interval $J\subset\mathbb R$. Assume that $Y$ is $\theta$-almost periodic in $p$-mean. Then,
\begin{enumerate}
 \item The set $\{\mathfrak T_{\tau}Y(t)\textrm{ $;$ }t\in J\textrm{, }\tau\in\mathbb R\}$ is relatively compact in $\mathbb L^p(\Omega;\mathbb R^d)$.
 \item For every $\varepsilon > 0$, there exists a compact subset $K$ of $\mathbb R^d$ such that
 \begin{displaymath}
 \sup_{t\in\mathbb R}\mathbb P(Y(t)\not\in K)\leqslant\varepsilon.
 \end{displaymath}
\end{enumerate}
\end{proposition}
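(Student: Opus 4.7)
The plan is to run a Bochner-type total-boundedness argument for assertion (1), and then derive (2) as a tightness consequence via Prokhorov's theorem.

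For (1), fix $\varepsilon>0$. By $\theta$-almost periodicity, the set of $\theta$-$(\varepsilon/3)$-almost periods in $p$-mean is relatively dense, so there exists $l>0$ such that any $\tau\in\mathbb R$ can be written $\tau=\tau_0+s$ with $\tau_0$ a $\theta$-$(\varepsilon/3)$-almost period and $s\in[-l,l]$. The key identity, obtained directly from Definition \ref{translation} and the group property $\theta_{-\tau}=\theta_{-\tau_0}\circ\theta_{-s}$, is
\begin{displaymath}
\mathfrak T_{\tau}Y(t,\omega)=Y(t+\tau,\theta_{-\tau}\omega)=\mathfrak T_{\tau_0}Y(t+s,\theta_{-s}\omega).
\end{displaymath}
Since $\theta_{-s}$ preserves $\mathbb P$, a change of variables in the expectation yields
\begin{displaymath}
d_p\bigl(\mathfrak T_{\tau}Y(t),\mathfrak T_s Y(t)\bigr)
=d_p\bigl(\mathfrak T_{\tau_0}Y(t+s),Y(t+s)\bigr)\leqslant\varepsilon/3.
\end{displaymath}

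Next, the continuity hypothesis in Definition \ref{almost periodic_processes} ensures that $(t,s)\mapsto\mathfrak T_s Y(t)$ is continuous for $d_p$, so the image of the compact rectangle $J\times[-l,l]$ under this map is a compact, hence totally bounded, subset of $\mathbb L^p(\Omega;\mathbb R^d)$. Cover it by finitely many $d_p$-balls of radius $\varepsilon/3$ centred at points $\mathfrak T_{s_i}Y(t_i)$. For any $t\in J$ and $\tau\in\mathbb R$, combining the displayed estimate with this covering gives $d_p(\mathfrak T_\tau Y(t),\mathfrak T_{s_i}Y(t_i))\leqslant 2\varepsilon/3<\varepsilon$ for some $i$. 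Thus $\{\mathfrak T_\tau Y(t):t\in J,\tau\in\mathbb R\}$ is totally bounded in the complete space $\mathbb L^p(\Omega;\mathbb R^d)$, hence relatively compact.

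For (2), apply (1) with the singleton $J=\{0\}$: the family $\{\mathfrak T_\tau Y(0):\tau\in\mathbb R\}$ is relatively compact in $\mathbb L^p(\Omega;\mathbb R^d)$. Since $\theta_{-\tau}$ preserves $\mathbb P$, the random variable $\mathfrak T_\tau Y(0)=Y(\tau,\theta_{-\tau}\cdot)$ has the same law as $Y(\tau)$. Because $\mathbb L^p$-relative compactness implies relative compactness in distribution (via convergence in probability and the Portmanteau theorem), the family of laws $\{\mathrm{Law}(Y(\tau)):\tau\in\mathbb R\}$ is relatively compact on $\mathbb R^d$; by Prokhorov's theorem it is tight, which is exactly the conclusion of (2).

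The main piece of bookkeeping, and the step where I expect the subtlety to lie, is the identity $\mathfrak T_{\tau}Y(t,\omega)=\mathfrak T_{\tau_0}Y(t+s,\theta_{-s}\omega)$ together with the measure-preserving change of variables that turns the $d_p$-distance at the arbitrary point $(t,\tau)$ into one controlled by the almost-period property of $\tau_0$ applied at the shifted time $t+s$. Everything else is standard: relative denseness furnishes the decomposition $\tau=\tau_0+s$, joint continuity in $(t,\tau)$ yields compactness of the ``small-shift'' piece, and Prokhorov supplies the transition from $\mathbb L^p$-compactness to tightness for assertion (2).
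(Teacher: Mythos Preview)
Your argument is correct and is exactly the classical Bochner total-boundedness argument, adapted to the $\theta$-translation setting; this is the approach the paper defers to in \cite[Proposition 3.10 and Subsection 3.3]{RF20}. The only cosmetic point is that relative denseness already yields $s\in[0,l]$ (take $\tau_0$ in the interval $[\tau-l,\tau]$), but your $[-l,l]$ works just as well.
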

\noindent
See \cite[Proposition 3.10 and Subsection 3.3]{RF20} for a proof.
%


%
\subsection{Wiener and Skorokhod integrals with respect to the fBm}
This subsection deals with the definitions and basic properties of Wiener's integral and of Skorokhod's integral with respect to the fractional Brownian motion of Hurst index greater than $1/2$.
%


%
\begin{definition}\label{Riemann_sum}
Let $y$ (resp. $w$) be a continuous function from $\mathbb R$ into $\mathcal M_d(\mathbb R)$ (resp. $\mathbb R^d$). Consider a dissection $D = (t_0,\dots,t_m)$ of $[s,t]$ with $m\in\mathbb N^*$ and $s,t\in\mathbb R$ such that $s < t$. The \emph{Riemann sum} of $y$ with respect to $w$ on $[s,t]$ for the dissection $D$ is
\begin{displaymath}
J_{y,w,D}(s,t) :=
\sum_{k = 0}^{m - 1}y(t_k)(w(t_{k + 1}) - w(t_k)).
\end{displaymath}
\end{definition}
\noindent
\textbf{Notation.} With the notations of Definition \ref{Riemann_sum}, the mesh of the dissection $D$ is
\begin{displaymath}
\delta(D) :=
\max_{k\in\llbracket 0,m - 1\rrbracket}
|t_{k + 1} - t_k|.
\end{displaymath}
\noindent
In the sequel, $(\Omega,\mathcal A,\mathbb P)$ is the canonical probability space associated to the $d$-dimensional fractional Brownian motion $B = (B_1,\dots,B_d)$.
\\
\\
On the one hand, consider the Banach space
\begin{displaymath}
|\mathcal H| :=
\{h\in\mathbb L^0(\mathbb R) :
\|h\|_{|\mathcal H|} <\infty\},
\end{displaymath}
where $\|.\|_{|\mathcal H|}$ is the norm defined by
\begin{displaymath}
\|h\|_{|\mathcal H|} :=
H(2H - 1)
\int_{-\infty}^{\infty}\int_{-\infty}^{\infty}
|t - s|^{2H - 2}|h(s)|\cdot |h(t)|dsdt
\end{displaymath}
for every $h\in\mathbb L^0(\mathbb R)$ (see Pipiras and Taqqu \cite{PT00}, Section 4).
%


%
\begin{theorem}\label{Wiener_integral}
Consider $s,t\in\mathbb R$ such that $s < t$, $j\in\{1,\dots,d\}$ and $h\in\mathbb L^0(\mathbb R)$ such that $h\mathbf 1_{[s,t]}\in|\mathcal H|$. There exists a unique $J_{h,B_j}(s,t)\in\mathbb L^2(\Omega;\mathbb R)$ such that for every sequence $(D_n)_{n\in\mathbb N}$ of dissections of $[s,t]$ satisfying $\delta(D_n)\rightarrow 0$ as $n\rightarrow\infty$,
\begin{displaymath}
\lim_{n\rightarrow\infty}
\mathbb E(|J_{h,B_j}(s,t) - J_{h,B_j,D_n}(s,t)|^2) = 0
\end{displaymath}
and
\begin{equation}\label{Wiener_integral_1}
\mathbb E(J_{h,B_j}(s,t)^2) =
H(2H - 1)
\int_{s}^{t}\int_{s}^{t}h(u)h(v)|v - u|^{2H - 2}dudv.
\end{equation}
The random variable $J_{h,B_j}(s,t)$ is the Wiener integral of $h$ with respect to $B_j$ on $[s,t]$ and it is denoted by
\begin{displaymath}
\int_{s}^{t}h(u)dB_j(u).
\end{displaymath}
\end{theorem}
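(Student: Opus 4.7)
The plan is Wiener's classical construction adapted to fBm in the long-range regime $H>1/2$, via the Pipiras--Taqqu space $|\mathcal H|$. First, for an elementary step function $\varphi=\sum_{k=0}^{m-1}a_k\mathbf 1_{[t_k,t_{k+1})}$ subordinate to a dissection $s=t_0<\cdots<t_m=t$ of $[s,t]$, I set
\[
J_{\varphi,B_j}(s,t):=\sum_{k=0}^{m-1}a_k\bigl(B_j(t_{k+1})-B_j(t_k)\bigr).
\]
Expanding $\mathbb E[J_{\varphi,B_j}(s,t)^2]$ and inserting the fBm cross-increment covariance
\[
\mathbb E\bigl[(B_j(b)-B_j(a))(B_j(d)-B_j(c))\bigr]=H(2H-1)\int_a^b\!\!\int_c^d|u-v|^{2H-2}\,du\,dv,
\]
valid for $H>1/2$ (this comes from differentiating the standard covariance $\tfrac12(|u|^{2H}+|v|^{2H}-|u-v|^{2H})$ twice), gives the isometry (\ref{Wiener_integral_1}) at the step-function level. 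Since the right-hand side is bounded by $\|\varphi\mathbf 1_{[s,t]}\|_{|\mathcal H|}$, the map $\varphi\mapsto J_{\varphi,B_j}(s,t)$ is continuous from the step functions (equipped with the $|\mathcal H|$-norm) into $\mathbb L^2(\Omega;\mathbb R)$.

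Second, I would invoke the density of step functions in $|\mathcal H|$ (Pipiras and Taqqu \cite{PT00}, Section 4) to extend this bounded linear map uniquely to all $h\in\mathbb L^0(\mathbb R)$ with $h\mathbf 1_{[s,t]}\in|\mathcal H|$, thereby defining $J_{h,B_j}(s,t)$. The identity (\ref{Wiener_integral_1}) passes to the limit by continuity of the bilinear form $(h,g)\mapsto H(2H-1)\iint h(u)g(v)|u-v|^{2H-2}\,du\,dv$ on $|\mathcal H|\times|\mathcal H|$. Uniqueness of $J_{h,B_j}(s,t)$ in $\mathbb L^2(\Omega;\mathbb R)$ is then automatic from completeness.

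Third, for the convergence of Riemann sums: given a dissection $D_n=(t_0^n,\dots,t_{m_n}^n)$ of $[s,t]$ with $\delta(D_n)\to 0$, the Riemann sum $J_{h,B_j,D_n}(s,t)$ coincides exactly with $J_{h_n,B_j}(s,t)$ for the step function $h_n:=\sum_k h(t_k^n)\mathbf 1_{[t_k^n,t_{k+1}^n)}$. By the extended isometry it therefore suffices to show $\|h_n-h\mathbf 1_{[s,t]}\|_{|\mathcal H|}\to 0$. Since the Riemann-sum construction of Definition \ref{Riemann_sum} is applied to the continuous functions of the paper, $h_n\to h$ uniformly on $[s,t]$, and the finiteness of $\int_s^t\!\int_s^t|u-v|^{2H-2}\,du\,dv$ (which holds precisely because $2H-2>-1$) yields the desired $|\mathcal H|$-convergence by dominated convergence.

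The main obstacle is the step-function isometry, which hinges on the explicit integral representation of the fBm covariance in the long-range regime $H>1/2$; once this is in hand, the extension and the Riemann-sum convergence are routine consequences of density and dominated convergence, both of which succeed only because the kernel $|u-v|^{2H-2}$ is locally integrable on compact squares.
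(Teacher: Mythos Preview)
The paper does not give its own proof of this theorem: immediately after the statement it simply writes ``(See Huang and Cambanis \cite{HC78}, Section 1)'' and moves on. So there is no argument in the paper to compare against; the result is imported as background.

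Your construction is the standard one and is correct. The three steps --- the elementary isometry on step functions via the integral representation of the fBm increment covariance (valid precisely because $2H-2>-1$), the unique bounded linear extension to $|\mathcal H|$ using the density result of Pipiras--Taqqu, and the identification of Riemann sums with step-function integrals $J_{h_n,B_j}$ followed by $|\mathcal H|$-convergence of $h_n$ to $h$ --- are exactly how this integral is built in the references the paper cites. Your observation that the Riemann-sum convergence claim only makes literal sense for continuous $h$ (since Definition~\ref{Riemann_sum} evaluates $h$ pointwise) is well taken and matches how the paper actually uses the theorem: the integrands $S(t-\cdot)\sigma(\cdot)$ appearing later are continuous. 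One minor remark: uniqueness is not really ``from completeness'' but from the fact that any two $\mathbb L^2$-limits of the same sequence coincide; this is a triviality either way.
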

\noindent
(See Huang and Cambanis \cite{HC78}, Section 1). Consider
\begin{displaymath}
|\mathcal H|_d :=
\{h\in\mathbb L^0(\mathbb R;\mathcal M_d(\mathbb R)) :
\forall i,j = 1,\dots,d\textrm{, }
h_{i,j}\in|\mathcal H|\}.
\end{displaymath}
For every $h\in\mathbb L^0(\mathbb R;\mathcal M_d(\mathbb R))$ and $s,t\in\mathbb R$ such that $s < t$ and $h\mathbf 1_{[s,t]}\in|\mathcal H|_d$, the Wiener integral of $h$ with respect to $B$ on $[s,t]$ is the random vector
\begin{displaymath}
\int_{s}^{t}h(u)dB(u) :=
\left(
\sum_{j = 1}^{d}\int_{s}^{t}h_{i,j}(u)dB_j(u)
\right)_{i = 1,\dots,d}.
\end{displaymath}
The following inequality is a straightforward consequence of Memin et al.~\cite[Theorem 1.1]{MMV01} and of basic properties of matrix norms.
%


%
\begin{proposition}\label{isometry_fWiener}
There exists a deterministic constant $\mathfrak c_{d,H} > 0$, depending only on $d$ and $H$, such that
\begin{displaymath}
\mathbb E\left(\left\|\int_{s}^{t}h(u)dB(u)\right\|^2\right)
\leqslant
\mathfrak c_{d,H}
\left(\int_{s}^{t}\|h(u)\|_{\normalfont{\textrm{op}}}^{1/H}du\right)^{2H}
\end{displaymath}
for every $s < t$ and $h\in\mathbb L^0(\mathbb R;\mathcal M_d(\mathbb R))$ satisfying $h\mathbf 1_{[s,t]}\in|\mathcal H|_d$.
\end{proposition}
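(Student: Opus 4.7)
The plan is to reduce the vector-valued bound to the scalar MMV inequality
$$\mathbb{E}\Bigl(\Bigl|\int_{s}^{t} g(u)\, dB_{j}(u)\Bigr|^{2}\Bigr) \leqslant c_{H} \Bigl(\int_{s}^{t} |g(u)|^{1/H}\, du\Bigr)^{2H},$$
which is Theorem 1.1 of Memin--Mishura--Valkeila, applied to each scalar entry of the matrix $h$, and then to re-pack the $d^{2}$ scalar bounds into a single operator-norm bound. Since only finitely many indices are involved, this produces a constant depending on $d$ and $H$ alone.

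Concretely, I would first write
$$\Bigl\|\int_{s}^{t} h(u)\, dB(u)\Bigr\|^{2} = \sum_{i=1}^{d} \Bigl(\sum_{j=1}^{d} \int_{s}^{t} h_{i,j}(u)\, dB_{j}(u)\Bigr)^{2},$$
and use the elementary inequality $(a_{1}+\dots+a_{d})^{2}\leqslant d\sum_{j} a_{j}^{2}$ to get, after taking expectations,
$$\mathbb{E}\Bigl(\Bigl\|\int_{s}^{t} h(u)\, dB(u)\Bigr\|^{2}\Bigr) \leqslant d \sum_{i,j=1}^{d} \mathbb{E}\Bigl(\Bigl|\int_{s}^{t} h_{i,j}(u)\, dB_{j}(u)\Bigr|^{2}\Bigr).$$
Applying the MMV inequality to each scalar Wiener integral (each $h_{i,j}\mathbf{1}_{[s,t]}\in|\mathcal H|$ by assumption) then yields
$$\mathbb{E}\Bigl(\Bigl\|\int_{s}^{t} h(u)\, dB(u)\Bigr\|^{2}\Bigr) \leqslant d\, c_{H} \sum_{i,j=1}^{d} \Bigl(\int_{s}^{t} |h_{i,j}(u)|^{1/H}\, du\Bigr)^{2H}.$$

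The last step is to pass from the entries to the operator norm. Since $|h_{i,j}(u)| = |e_{i}^{\top} h(u) e_{j}| \leqslant \|h(u)\|_{\mathrm{op}}$ for every $i,j$, each of the $d^{2}$ summands is dominated by $\bigl(\int_{s}^{t}\|h(u)\|_{\mathrm{op}}^{1/H}\, du\bigr)^{2H}$, which gives the result with $\mathfrak{c}_{d,H} := d^{3} c_{H}$. There is essentially no analytic obstacle here: the only mild points to check are the combinatorial constant from $(\sum a_{j})^{2} \leqslant d\sum a_{j}^{2}$ (one could sharpen it via Cauchy--Schwarz but it does not matter for the qualitative statement) and the pointwise entrywise bound by the operator norm, both of which are standard.
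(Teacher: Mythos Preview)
Your argument is correct and is exactly what the paper has in mind: the paper does not give a detailed proof but simply states that the inequality ``is a straightforward consequence of Memin et al.~[Theorem 1.1] and of basic properties of matrix norms,'' which is precisely the combination (scalar MMV bound applied entrywise, then $|h_{i,j}(u)|\leqslant\|h(u)\|_{\mathrm{op}}$) that you carry out.
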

\noindent
Finally, the isometry property (\ref{Wiener_integral_1}) together with the completeness of $\mathbb L^2(\Omega;\mathbb R^d)$ allow to prove the following proposition.
%


%
\begin{proposition}\label{Wiener_integral_R}
For every $h\in\mathbb L^0(\mathbb R;\mathcal M_d(\mathbb R))$ and $t\in\mathbb R$ such that $h\mathbf 1_{]-\infty,t]}\in|\mathcal H|_d$, there exists a unique $J_{h,B}(t)\in\mathbb L^2(\Omega;\mathbb R^d)$ such that
\begin{displaymath}
\lim_{s\rightarrow\infty}
\mathbb E\left(\left\|J_{h,B}(t) -\int_{-s}^{t}h(u)dB(u)\right\|^2\right) = 0.
\end{displaymath}
The random variable $J_{h,B}(t)$ is the Wiener integral of $h$ with respect to $B$ on $]-\infty,t]$ and it is denoted by
\begin{displaymath}
\int_{-\infty}^{t}h(u)dB(u).
\end{displaymath}
\end{proposition}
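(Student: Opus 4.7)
The plan is to show that the net $\bigl(\int_{-s}^{t}h(u)\,dB(u)\bigr)_{s>\max(0,-t)}$ is Cauchy in $\mathbb{L}^2(\Omega;\mathbb{R}^d)$ and then invoke the completeness of this space to produce a unique limit $J_{h,B}(t)$; this is exactly the pattern suggested by the paragraph preceding the statement.

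First, I would fix $s_2>s_1>\max(0,-t)$ and write, by the additivity of the Wiener integral across adjacent intervals (which follows from Theorem~\ref{Wiener_integral} applied to concatenated dissections),
\begin{displaymath}
\int_{-s_2}^{t}h(u)\,dB(u)-\int_{-s_1}^{t}h(u)\,dB(u)=\int_{-s_2}^{-s_1}h(u)\,dB(u).
\end{displaymath}
Working componentwise and using the scalar isometry (\ref{Wiener_integral_1}) for each $(i,j)$, I would get
\begin{displaymath}
\mathbb{E}\!\left(\!\left|\int_{-s_2}^{-s_1}\!\!h_{i,j}(u)\,dB_j(u)\right|^2\right)\leqslant H(2H-1)\!\int_{-s_2}^{-s_1}\!\!\int_{-s_2}^{-s_1}\!|h_{i,j}(u)|\,|h_{i,j}(v)|\,|v-u|^{2H-2}\,du\,dv.
\end{displaymath}
Summing over $i,j$ (using that the Euclidean norm of the vector integral is controlled by the sum of the squares of its components and Cauchy--Schwarz over $j$), I obtain a bound of the form $C_d\sum_{i,j}\bigl\|h_{i,j}\mathbf 1_{[-s_2,-s_1]}\bigr\|_{|\mathcal H|}$.

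The hypothesis $h\mathbf 1_{]-\infty,t]}\in|\mathcal H|_d$ says exactly that each integrand $(u,v)\mapsto |h_{i,j}(u)|\,|h_{i,j}(v)|\,|v-u|^{2H-2}$ is integrable on $]-\infty,t]^2$. Since for any fixed $(u,v)\in]-\infty,t]^2$ one has $\mathbf 1_{[-s_2,-s_1]^2}(u,v)\to 0$ as $s_1\to\infty$, dominated convergence yields
\begin{displaymath}
\bigl\|h_{i,j}\mathbf 1_{[-s_2,-s_1]}\bigr\|_{|\mathcal H|}\longrightarrow 0\quad\text{as }s_1,s_2\to\infty,
\end{displaymath}
establishing the Cauchy property. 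By completeness of $\mathbb L^2(\Omega;\mathbb R^d)$, there exists $J_{h,B}(t)\in\mathbb L^2(\Omega;\mathbb R^d)$ to which $\int_{-s}^{t}h(u)\,dB(u)$ converges in $\mathbb L^2$ as $s\to\infty$, and uniqueness follows from the Hausdorff property of the norm topology.

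There is no real obstacle here; the only point requiring mild care is the dominated convergence step, where one must verify that the relevant indicator functions go to zero pointwise on a set of full measure in $]-\infty,t]^2$ so that the $|\mathcal H|$-integrability of the tail truly forces the double integrals over $[-s_2,-s_1]^2$ to vanish.
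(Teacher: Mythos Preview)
Your proposal is correct and follows exactly the approach the paper indicates in the sentence preceding the proposition: use the isometry property (\ref{Wiener_integral_1}) to show the family $\bigl(\int_{-s}^{t}h(u)\,dB(u)\bigr)_s$ is Cauchy, then invoke completeness of $\mathbb L^2(\Omega;\mathbb R^d)$. The paper gives no further details, and your write-up supplies precisely the routine computation (componentwise isometry, tail integrability from $h\mathbf 1_{]-\infty,t]}\in|\mathcal H|_d$, dominated convergence) that the paper leaves to the reader.
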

%


%
\begin{remark}\label{isometry_fWiener_R}
By Propositions \ref{isometry_fWiener} and \ref{Wiener_integral_R}, for any $h\in\mathbb L^0(\mathbb R;\mathcal M_d(\mathbb R))$ and $t\in\mathbb R$ such that $h\mathbf 1_{]-\infty,t]}\in|\mathcal H|_d$,
\begin{eqnarray*}
 \mathbb E\left(\left\|\int_{-\infty}^{t}h(u)dB(u)\right\|^2\right) & = &
 \lim_{s\rightarrow\infty}
 \mathbb E\left(\left\|\int_{-s}^{t}h(u)dB(u)\right\|^2\right)\\
 & \leqslant &
 \mathfrak c_{d,H}
 \left(\int_{-\infty}^{t}\|h(u)\|_{\normalfont{\textrm{op}}}^{1/H}du\right)^{2H}.
\end{eqnarray*}
\end{remark}
\noindent
On the other hand, for $d = 1$ and $T > 0$, consider the reproducing kernel Hilbert space
\begin{displaymath}
\mathfrak H :=
\{h\in\mathbb L^0([0,T]) :
\langle h,h\rangle_{\mathfrak H} <\infty\}
\end{displaymath}
of $B_{|[0,T]}$, where $\langle .,.\rangle_{\mathfrak H}$ is the inner product defined by
\begin{displaymath}
\langle h,\eta\rangle_{\mathfrak H} :=
H(2H - 1)
\int_{0}^{T}\int_{0}^{T}
|t - s|^{2H - 2}h(s)\eta(t)dsdt
\end{displaymath}
for every $h,\eta\in\mathbb L^0([0,T])$. Moreover, let $(\mathbf B(h))_{h\in\mathfrak H}$ be an isonormal Gaussian process associated with the Hilbert space $\mathfrak H$ in the sense of Nualart \cite{NUALART06}, Definition 1.1.1.
%


%
\begin{definition}\label{Malliavin_derivative}
The \emph{Malliavin derivative} of a smooth functional
\begin{displaymath}
F = f(
\mathbf B(h_1),\dots,
\mathbf B(h_n))
\end{displaymath}
where $n\in\mathbb N^*$, $f\in C_{\mathbf p}^{\infty}(\mathbb R^n;\mathbb R)$ and $h_1,\dots,h_n\in\mathfrak H$, is the $\mathfrak H$-valued random variable
\begin{displaymath}
\mathbf DF :=
\sum_{k = 1}^{n}
\partial_k f
(
\mathbf B(h_1),\dots,
\mathbf B(h_n))h_k.
\end{displaymath}
\end{definition}
%


%
\begin{proposition}\label{Malliavin_derivative_domain}
The map $\mathbf D$ is closable from $\mathbb L^2(\Omega,\mathcal A,\mathbb P)$ into $\mathbb L^2(\Omega;\mathfrak H)$. Its domain in $\mathbb L^2(\Omega,\mathcal A,\mathbb P)$, denoted by $\mathbb D^{1,2}$, is the closure of the smooth functionals space for the seminorm $\|.\|_{1,2}$ defined by
\begin{displaymath}
\|F\|_{1,2}^{2} :=
\mathbb E(|F|^2) +
\mathbb E(\|\mathbf DF\|_{\mathfrak H}^{2}) < \infty
\end{displaymath}
for every $F\in\mathbb L^2(\Omega,\mathcal A,\mathbb P)$.
\end{proposition}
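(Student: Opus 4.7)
The plan is to prove closability via a duality (integration-by-parts) argument and then to identify $\mathbb{D}^{1,2}$ as the closure of the smooth functionals under $\|\cdot\|_{1,2}$. The strategy follows the standard template for isonormal Gaussian processes (cf.\ \cite{NUALART06}, Chapter~1); nothing specific to the fractional setting enters beyond the definition of $\mathfrak{H}$.

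First, I would establish the integration-by-parts formula on smooth functionals: for every $h\in\mathfrak{H}$ and every smooth $F = f(\mathbf{B}(h_1),\ldots,\mathbf{B}(h_n))$,
\begin{equation*}
\mathbb{E}\bigl[\langle \mathbf{D}F, h\rangle_{\mathfrak{H}}\bigr]
= \mathbb{E}[F\,\mathbf{B}(h)].
\end{equation*}
The point is to reduce this to a finite-dimensional Gaussian computation: by Gram--Schmidt one enlarges the family $(h_1,\ldots,h_n,h)$ into an orthonormal system $(e_1,\ldots,e_m)$ in $\mathfrak{H}$, so that $(\mathbf{B}(e_1),\ldots,\mathbf{B}(e_m))$ is a standard Gaussian vector in $\mathbb{R}^m$. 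Rewriting $F$ and $\mathbf{D}F$ in terms of this system, the identity reduces to the classical Gaussian integration-by-parts relation $\mathbb{E}[X g(X)] = \mathbb{E}[g'(X)]$, applied coordinatewise; the polynomial-growth condition built into $C_{\mathbf{p}}^{\infty}$ ensures that every integral in sight is finite. Combined with the Leibniz rule $\mathbf{D}(FG) = F\mathbf{D}G + G\mathbf{D}F$ on smooth functionals (a direct consequence of the chain rule in Definition~\ref{Malliavin_derivative}), this yields the bilinear version
\begin{equation*}
\mathbb{E}\bigl[G\langle \mathbf{D}F, h\rangle_{\mathfrak{H}}\bigr]
= \mathbb{E}[F G\,\mathbf{B}(h)]
- \mathbb{E}\bigl[F\langle \mathbf{D}G, h\rangle_{\mathfrak{H}}\bigr],
\end{equation*}
valid for any pair of smooth functionals $F,G$.

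For closability, let $(F_n)$ be a sequence of smooth functionals with $F_n\to 0$ in $\mathbb{L}^2(\Omega)$ and $\mathbf{D}F_n\to\eta$ in $\mathbb{L}^2(\Omega;\mathfrak{H})$. I want to show $\eta = 0$. Fix $h\in\mathfrak{H}$ and a smooth functional $G$; since smooth functionals have polynomial growth and $\mathbf{B}(h)$ admits Gaussian moments of every order, both $G\mathbf{B}(h)$ and $\langle \mathbf{D}G, h\rangle_{\mathfrak{H}}$ lie in $\mathbb{L}^2(\Omega)$. Applying the bilinear formula above to $F_n$ and $G$ and letting $n\to\infty$, Cauchy--Schwarz gives that the left-hand side converges to $\mathbb{E}[G\langle \eta, h\rangle_{\mathfrak{H}}]$ while both terms on the right tend to $0$; hence $\mathbb{E}[G\langle \eta, h\rangle_{\mathfrak{H}}] = 0$ for every smooth $G$. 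Since the smooth functionals are dense in $\mathbb{L}^2(\Omega,\mathcal{A},\mathbb{P})$, this forces $\langle \eta, h\rangle_{\mathfrak{H}} = 0$ almost surely for each $h$, and taking $h$ in a countable dense subset of $\mathfrak{H}$ gives $\eta = 0$ almost surely. Closability being established, $\mathbf{D}$ admits a smallest closed extension whose domain coincides, by definition, with the completion of the smooth functionals for the graph seminorm $\|\cdot\|_{1,2}$; this is $\mathbb{D}^{1,2}$.

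The main technical step is the first one: the integration-by-parts formula must be carefully grounded in the finite-dimensional Gaussian calculus through the orthonormalisation argument, and this is precisely where the polynomial-growth assumption in the definition of $C_{\mathbf{p}}^{\infty}(\mathbb{R}^n;\mathbb{R})$ is essential, both to legitimate the Gaussian integration-by-parts and to justify the limit passage in the closability argument.
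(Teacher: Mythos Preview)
Your argument is correct and is precisely the classical closability proof for the Malliavin derivative on an isonormal Gaussian space; the paper does not give its own proof but simply refers to Nualart \cite[Proposition~1.2.1]{NUALART06}, which is exactly the argument you have reproduced.
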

\noindent
For a proof, see Nualart \cite[Proposition 1.2.1]{NUALART06}.
%


%
\begin{definition}\label{divergence_operator}
The adjoint $\delta$ of the Malliavin derivative $\mathbf D$ is the
\emph{divergence operator}. The domain of $\delta$ is denoted by
$\normalfont{\textrm{dom}}(\delta)$, and
$u\in\normalfont{\textrm{dom}}(\delta)$ if and only if there exists a
deterministic constant $\mathfrak c_u > 0$ such that for every
$F\in\mathbb D^{1,2}$, 
\begin{displaymath}
|\mathbb E(\langle\mathbf DF,u\rangle_{\mathfrak H})|
\leqslant
\mathfrak c_u\mathbb E(F^2)^{1/2}.
\end{displaymath}
\noindent
For any process $Y := (Y(s))_{s\in\mathbb R_+}$ and every $t\in [0,T]$, if $Y\mathbf 1_{[0,t]}\in\textrm{dom}(\delta)$, its \emph{Skorokhod integral} with respect to $B$ is defined on $[0,t]$ by
\begin{displaymath}
\int_{0}^{t}Y(s)\delta B(s) :=
\delta(Y\mathbf 1_{[0,t]}).
\end{displaymath}
\end{definition}


%
\section{Almost periodic and periodic solutions to Equation (\ref{main_equation})}\label{section_ap_solutions}
Throughout this section, $A$, $b$ and $\sigma$ fulfill the following assumption.
%


%
\begin{assumption}\label{conditions_b_sigma}
The functions $S : t\in\mathbb R\mapsto\exp(At)$, $b$ and $\sigma$ satisfy the four following conditions:
\begin{enumerate}
 \item There exist $\mathfrak c_S,\mathfrak m_S > 0$ such that for every $t\in\mathbb R$, $\|S(t)\|_{\normalfont{\textrm{op}}}\leqslant\mathfrak c_Se^{-\mathfrak m_St}$.
 \item There exist $\mathfrak c_b,\mathfrak m_b > 0$ such that for every $t\in\mathbb R$ and $x,y\in\mathbb R^d$,
 \begin{displaymath}
 \|b(t,x) - b(t,y)\|\leqslant\mathfrak c_b\|x - y\|
 \textrm{ and }
 \|b(t,x)\|\leqslant\mathfrak m_b(1 +\|x\|).
 \end{displaymath}
 \item For every $t\in\mathbb R$, $S(t -\cdot)\sigma(\cdot)\mathbf 1_{]-\infty,t]}(\cdot)\in|\mathcal H|_d$.
 \item $b$ (resp. $\sigma$) is almost periodic uniformly with respect to the compact subsets of $\mathbb R^d$ (resp. almost periodic).
\end{enumerate}
\end{assumption}
\noindent
A $d$-dimensional continuous process $X$ is a solution to Equation (\ref{main_equation}) if and only if
\begin{displaymath}
X(t) =
\int_{-\infty}^{t}S(t - s)b(s,X(s))ds +\int_{-\infty}^{t}S(t - s)\sigma(s)dB(s)
\textrm{ $;$ }
\forall t\in\mathbb R.
\end{displaymath}
In order to investigate the question of the existence of almost periodic solutions to Equation (\ref{main_equation}), let $\theta = (\theta_t)_{t\in\mathbb R}$ be the dynamical system on $(\Omega,\mathcal A)$, called Wiener shift, such that
\begin{displaymath}
\theta_t\omega :=
\omega(t +\cdot) -\omega(t)
\end{displaymath}
for every $\omega\in\Omega$ and $t\in\mathbb R$. By Maslowski and Schmalfuss \cite{MS04}, $(\Omega,\mathcal A,\mathbb P,\theta)$ is an ergodic metric dynamical system.
%


%
\begin{remark}\label{invariance_increments_fBm}
For any $t,\tau\in\mathbb R$ and $\omega\in\Omega$,
\begin{displaymath}
B(t +\tau,\theta_{-\tau}\omega) =
B(t,\omega) - B(-\tau,\omega).
\end{displaymath}
Then, for every $s\in\mathbb R$,
\begin{displaymath}
\mathfrak T_{\tau}(B(\cdot + s) - B(\cdot))(t,\omega) = B(t + s,\omega) - B(t,\omega).
\end{displaymath}
\end{remark}
\noindent
For $p\geqslant 1$, 
let $\textrm{AP}^p(\Omega;\mathbb R^d)$ denote the space of continuous, uniformly bounded and $\theta$-almost periodic in $p$-mean processes. Consider also the operator $\Gamma$ defined on $\textrm{AP}^2(\Omega;\mathbb R^d)$ by
\begin{displaymath}
\Gamma X(t) :=
\int_{-\infty}^{t}
S(t - s)b(s,X(s))ds +
\int_{-\infty}^{t}
S(t - s)\sigma(s)dB(s)
\end{displaymath}
for every $X\in\textrm{AP}^2(\Omega;\mathbb R^d)$.
%


%
\begin{theorem}\label{almost_periodic_solutions}
Under Assumption \ref{conditions_b_sigma}, $\Gamma$ maps $\normalfont{\textrm{AP}^2}(\Omega;\mathbb R^d)$ into itself. Moreover, if
\begin{displaymath}
\frac{\mathfrak c_S\mathfrak c_b}{\mathfrak m_S} < 1,
\end{displaymath}
then Equation (\ref{main_equation}) has a unique continuous, uniformly bounded and $\theta$-almost periodic in square mean solution.
\end{theorem}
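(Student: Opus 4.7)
The plan is to apply Banach's fixed point theorem to $\Gamma$ on the complete metric space $(\textrm{AP}^2(\Omega;\mathbb R^d),\|\cdot\|_\infty)$, equipped with the uniform $L^2$-norm $\|Y\|_\infty:=\sup_{t\in\mathbb R}\|Y(t)\|_{L^2}$. I split $\Gamma=\Gamma_1+\Gamma_2$ with
\[
\Gamma_1 X(t):=\int_{-\infty}^t S(t-s)b(s,X(s))\,ds,\qquad \Gamma_2(t):=\int_{-\infty}^t S(t-s)\sigma(s)\,dB(s),
\]
and show in turn (i) $\Gamma_2\in\textrm{AP}^2$; (ii) $\Gamma_1$ maps $\textrm{AP}^2$ into itself; (iii) $\Gamma$ is a strict contraction whenever $\mathfrak c_S\mathfrak c_b/\mathfrak m_S<1$.

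For (i), the uniform $L^2$-bound on $\Gamma_2$ will follow from Remark \ref{isometry_fWiener_R}, the exponential decay of $S$, and the boundedness of $\sigma$ (every Bohr almost periodic function is bounded). For $\theta$-almost periodicity, a change of variable $s\mapsto s+\tau$ in the approximating Riemann sums, combined with Remark \ref{invariance_increments_fBm}, gives the key identity
\[
\mathfrak T_\tau\Gamma_2(t)-\Gamma_2(t)=\int_{-\infty}^t S(t-u)[\sigma(u+\tau)-\sigma(u)]\,dB(u).
\]
Proposition \ref{isometry_fWiener} together with the submultiplicativity of the operator norm then yields an estimate of order $\mathfrak c_{d,H}\mathfrak c_S^2\varepsilon^2(H/\mathfrak m_S)^{2H}$, uniform in $t$, whenever $\tau$ is an $\varepsilon$-almost period of $\sigma$; the set of $\theta$-almost periods of $\Gamma_2$ is therefore relatively dense. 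The joint continuity of $(t,\tau)\mapsto\mathfrak T_\tau\Gamma_2(t)$ in $L^2$ follows from the same isometry and dominated convergence.

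For (ii), the uniform $L^2$-bound on $\Gamma_1 X$ is immediate from Minkowski's inequality, the sublinearity of $b$, and the decay of $S$. A change of variables gives
\[
\mathfrak T_\tau\Gamma_1 X(t)-\Gamma_1 X(t)=\int_{-\infty}^t S(t-u)\bigl[b(u+\tau,\mathfrak T_\tau X(u))-b(u,X(u))\bigr]\,du,
\]
which I split via $\pm b(u+\tau,X(u))$. The Lipschitz piece is controlled in $L^2$ by $\mathfrak c_b\|\mathfrak T_\tau X(u)-X(u)\|_{L^2}$, small uniformly in $u$ when $\tau$ is a $\theta$-$\varepsilon$-almost period of $X$. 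I expect the second piece $b(u+\tau,X(u))-b(u,X(u))$ to be the main obstacle, since $b$ is only almost periodic uniformly on compacts while $X(u)$ ranges over all of $\mathbb R^d$. To handle it I will invoke Proposition \ref{compactness_ap_processes}(2) to pick a compact $K\subset\mathbb R^d$ with $\sup_u\mathbb P(X(u)\notin K)\leq\varepsilon$, and (by Bochner's criterion applied to the pair $(X,b(\cdot,\cdot)_{|K})$) a common $\varepsilon$-almost period $\tau$, giving a pointwise $L^2$-bound of order $\varepsilon$ on the event $\{X(u)\in K\}$. The complementary event is controlled by uniform integrability of $\{\|X(u)\|^2:u\in\mathbb R\}$, which I obtain from Proposition \ref{compactness_ap_processes}(1): since $\theta$ is measure preserving, $X(u)$ has the same law as $\mathfrak T_u X(0)$, and the $L^2$-relative compactness of $\{\mathfrak T_u X(0):u\in\mathbb R\}$ yields the required uniform integrability in $L^2$.

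For (iii), the Lipschitz hypothesis on $b$ and the exponential bound on $S$ give
\[
\|\Gamma X(t)-\Gamma Y(t)\|_{L^2}\leq\int_{-\infty}^t\mathfrak c_S e^{-\mathfrak m_S(t-s)}\mathfrak c_b\|X(s)-Y(s)\|_{L^2}\,ds\leq\frac{\mathfrak c_S\mathfrak c_b}{\mathfrak m_S}\|X-Y\|_\infty,
\]
making $\Gamma$ a strict contraction under the stated hypothesis. Since $\textrm{AP}^2(\Omega;\mathbb R^d)$ is closed under uniform $L^2$-convergence, Banach's fixed point theorem yields a unique fixed point, which is the desired continuous, uniformly bounded, $\theta$-almost periodic in square mean solution of (\ref{main_equation}).
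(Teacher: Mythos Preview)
Your approach is essentially the paper's: the paper keeps $\Gamma X$ together and splits $\mathfrak T_\tau\Gamma X-\Gamma X$ into three integrals $I^1_\tau,I^2_\tau,I^3_\tau$ (time-shift in $b$, Lipschitz shift in $x$, stochastic term), whereas you first separate $\Gamma=\Gamma_1+\Gamma_2$ and then split the drift via $\pm b(u+\tau,X(u))$; the ingredients---Proposition \ref{compactness_ap_processes} for the compact $K$ and uniform integrability, Proposition \ref{isometry_fWiener} for the stochastic estimate, and Banach's fixed point---are identical. One loose end: Definition \ref{almost periodic_processes}.(2) also requires continuity of $(t,\tau)\mapsto\mathfrak T_\tau\Gamma_1 X(t)$ in $L^2$, which you verify for $\Gamma_2$ but omit for $\Gamma_1$; the paper dedicates its Step 3 to this, and the argument is parallel to your almost-period estimate (uniform continuity of $b$ on $\mathbb R\times K$ for the time-shift piece, and continuity of $\tau\mapsto\mathfrak T_\tau X$ in $L^2$ for the Lipschitz piece).
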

%


%
\begin{proof}
Consider $X\in\textrm{AP}^2(\Omega;\mathbb R^d)$ and $\varepsilon_0 > 0$. The conditions on $S$, $b$ and $\sigma$ together with well known inequalities on Riemman's integral and Propositions \ref{isometry_fWiener} and \ref{Wiener_integral_R} give immediately that $\Gamma X$ is a continuous and uniformly bounded process. It remains to prove, in three steps, that $\Gamma X$ is $\theta$-almost periodic in square mean. A fourth step deals with the existence and uniqueness of the solution to Equation (\ref{main_equation}).
\\
\\
\emph{Step 1.} This is a preliminary step which provides useful controls for Steps 2 and 3. Consider $X\in\textrm{AP}^2(\Omega;\mathbb R^d)$. For any $s\in\mathbb R$, the set $\{X(s +\tau,\theta_{-\tau}.)\textrm{ $;$ }\tau\in\mathbb R\}$ is relatively compact in $\mathbb L^2(\Omega;\mathbb R^d)$ by Proposition \ref{compactness_ap_processes}.(1). Then, $\omega\mapsto X(s +\tau,\theta_{-\tau}\omega)$ is uniformly square integrable with respect to $\tau\in\mathbb R$. By Assumption \ref{conditions_b_sigma}.(2), $\omega\mapsto b(s +\tau,X(s +\tau,\theta_{-\tau}\omega))$ is also uniformly square integrable with respect to $\tau\in\mathbb R$. Therefore, for any $\alpha > 0$, there exists $\eta\in ]0,\alpha\wedge 1[$ such that for any $A\in\mathcal A$,
\begin{equation}\label{almost_periodic_solutions_1}
\forall s,\tau\in\mathbb R\textrm{, }
\mathbb P(A) <\eta
\Longrightarrow
\left\{
\begin{array}{rcl}
 \mathbb E(\|X(s +\tau,\theta_{-\tau}.)\|^2\mathbf 1_A) & < & \alpha\\
 \mathbb E(\|b(s +\tau,X(s +\tau,\theta_{-\tau}.))\|^2\mathbf 1_A) & < & \alpha
\end{array}\right. .
\end{equation}
Moreover, by Proposition \ref{compactness_ap_processes}.(2), there exists a compact subset $K_{\alpha}$ of $\mathbb R^d$ such that
\begin{equation}\label{almost_periodic_solutions_2}
\forall s,\tau\in\mathbb R\textrm{, }
\mathbb P(X(s +\tau,\theta_{-\tau}.)\in K_{\alpha})\geqslant 1 -\eta.
\end{equation}
Finally, by Assumption \ref{conditions_b_sigma}.(4), $b$ (resp. $\sigma$) is uniformly continuous on $\mathbb R\times K_{\alpha}$ (resp. $\mathbb R$) and then, one can choose $\eta$ such that in addition to (\ref{almost_periodic_solutions_1}), for every $s,\tau\in\mathbb R$ satisfying $|\tau - s| <\eta$,
\begin{equation}\label{almost_periodic_solutions_3}
\left\{
\begin{array}{rcl}
 \|\sigma(\tau) -\sigma(s)\|^2 < \alpha
 \textrm{ and }
 \displaystyle{\sup_{x\in K_{\alpha}}\|b(\tau,x) - b(s,x)\|^2} & < & \alpha\\
 \displaystyle{\sup_{u\in\mathbb R}\mathbb E(\|X(u +\tau,\theta_{-\tau}.) - X(u + s,\theta_{-s}.)\|^2)} & < & \alpha
\end{array}\right..
\end{equation}
\emph{Step 2.} Let us establish in this step that for any
$\varepsilon_0 > 0$, the set of $\theta$-$\varepsilon_0$-almost
periods of $\Gamma X$ is relatively dense. By Assumption
\ref{conditions_b_sigma}.(4),
\cite[Corollary 3.4]{RF20} on the almost periodicity in product spaces, and by
\cite[Proposition 3.17]{RF20},  ensuring that a continuous process is $\theta$-almost periodic if and only if its translation is an almost periodic map,
\begin{displaymath}
t\in\mathbb R\longmapsto
(b(t,x),\sigma(t),X(t,.))
\end{displaymath}
is $\theta$-almost periodic uniformly with respect to $x$ in compact subsets of $\mathbb R^d$ (see Definition \ref{almost_periodic_functions}.(4)).
\\
\\
Consider $\varepsilon > 0$ and let $T_{\varepsilon}$ be the relatively dense set of common $\varepsilon$-almost periods of $X$, $b(.,x)$ and $\sigma$ for every $x\in K_{\alpha}$. Let us show that for an appropriate choice of $\varepsilon$ and $\alpha$, the set $T_{\varepsilon}$ is contained in the set of $\varepsilon_0$-almost periods in square mean of $\Gamma X$.
\\
\\
Consider $\tau\in T_{\varepsilon}$ and, without loss of generality, assume that $\tau > 0$. By the definition of $\Gamma X$ together with Remark \ref{invariance_increments_fBm}, for any $t\in\mathbb R$,
\begin{eqnarray*}
 \mathfrak T_{\tau}\Gamma X(t,.) & = &
 \int_{-\infty}^{t +\tau}S(t +\tau - s)b(s,X(s,\theta_{-\tau}.))ds\\
 & & +
 \int_{-\infty}^{t +\tau}S(t +\tau - s)\sigma(s)dB(s,\theta_{-\tau}.)\\
 & = &
 \int_{-\infty}^{t}S(t - s)b(s +\tau,X(s +\tau,\theta_{-\tau}.))ds\\
 & & +
 \int_{-\infty}^{t}S(t - s)\sigma(s +\tau)dB(s,.).
\end{eqnarray*}
So,
\begin{equation}\label{almost_periodic_solutions_4}
\mathbb E(\|\mathfrak T_{\tau}\Gamma X(t,.) -\Gamma X(t,.)\|^2)
\leqslant
3(\mathbb E(I_{\tau}^{1}(t)^2) +\mathbb E(I_{\tau}^{2}(t)^2) +\mathbb E(I_{\tau}^{3}(t)^2))
\end{equation}
where
\begin{eqnarray*}
 I_{\tau}^{1}(t) & := &
 \left\|\int_{-\infty}^{t}S(t - s)(b(s +\tau,X(s +\tau,\theta_{-\tau}.)) - b(s,X(s +\tau,\theta_{-\tau}.)))ds\right\|,\\
 I_{\tau}^{2}(t) & := &
 \left\|\int_{-\infty}^{t}S(t - s)(b(s,X(s +\tau,\theta_{-\tau}.)) - b(s,X(s,.)))ds\right\|
 \textrm{ and}\\
 I_{\tau}^{3}(t) & := &
 \left\|\int_{-\infty}^{t}S(t - s)(\sigma(s +\tau) -\sigma(s))dB(s,.)\right\|.
\end{eqnarray*}
Let us find suitable bounds for $\mathbb E(I_{\tau}^{1}(t)^2)$, $\mathbb E(I_{\tau}^{2}(t)^2)$ and $\mathbb E(I_{\tau}^{3}(t)^2)$.
\begin{enumerate}
 \item For every $s\in\mathbb R$, consider
 \begin{displaymath}
 A_{\alpha}(\tau,s) :=
 \{\omega\in\Omega : X(s +\tau,\theta_{-\tau}\omega)\in K_{\alpha}\}
 \end{displaymath}
 and
 \begin{displaymath}
 b^{\tau}(s,.) :=
 b(s +\tau,X(s +\tau,\theta_{-\tau}.)) -
 b(s,X(s +\tau,\theta_{-\tau}.)).
 \end{displaymath}
 On the one hand, since $\tau$ is an $\varepsilon$-almost period of $b(.,x)$ uniformly with respect to $x\in K_{\alpha}$, for any $s\in\mathbb R$,
 \begin{equation}\label{almost_periodic_solutions_5}
 \|b^{\tau}(s,.)\|\mathbf 1_{A_{\alpha}(\tau,s)}\leqslant\varepsilon.
 \end{equation}
 On the other hand, by (\ref{almost_periodic_solutions_2}),
 \begin{displaymath}
 \mathbb P(A_{\alpha}(\tau,s))\geqslant 1 -\eta\geqslant 1 -\alpha
 \end{displaymath}
 and then by (\ref{almost_periodic_solutions_1}),
 \begin{eqnarray}
  \mathbb E(\|b^{\tau}(s,.)\|^2\mathbf 1_{\Omega\backslash A_{\alpha}(\tau,s)})
  & \leqslant &
  2\mathbb E(\|b(s +\tau,X(s +\tau,\theta_{-\tau}.))\|^2\mathbf 1_{\Omega\backslash A_{\alpha}(\tau,s)})
  \nonumber\\
  & & +
  2\mathbb E(\|b(s,X(s +\tau,\theta_{-\tau}.))\|^2\mathbf 1_{\Omega\backslash A_{\alpha}(\tau,s)})
  \nonumber\\
  \label{almost_periodic_solutions_6}
  & \leqslant &
  4\alpha.
 \end{eqnarray}
 So, by Jensen's inequality, Assumption \ref{conditions_b_sigma}.(1), and Inequalities (\ref{almost_periodic_solutions_5}) and (\ref{almost_periodic_solutions_6}),
 \begin{eqnarray*}
  \mathbb E(I_{\tau}^{1}(t)^2) & \leqslant &
  \mathbb E\left(\left|\int_{-\infty}^{t}\|S(t - s)\|_{\textrm{op}}\|b^{\tau}(s,.)\|ds\right|^2\right)\\
  & \leqslant &
  \mathfrak c_{S}^{2}
  \mathbb E\left(\left|\int_{-\infty}^{t}e^{-\mathfrak m_S(t - s)}\|b^{\tau}(s,.)\|ds\right|^2\right)\\
  & \leqslant &
  \frac{\mathfrak c_{S}^{2}}{\mathfrak m_S}
  \int_{-\infty}^{t}e^{-\mathfrak m_S(t - s)}\mathbb E(\|b^{\tau}(s,.)\|^2)ds
  \leqslant
  \mathfrak c_1(\varepsilon^2 + 4\alpha)
 \end{eqnarray*}
 with
 \begin{displaymath}
 \mathfrak c_1 :=
 \left(\frac{\mathfrak c_S}{\mathfrak m_S}\right)^2.
 \end{displaymath}
 \item By Assumption \ref{conditions_b_sigma}.(1,2,4) and since $\tau$ is a $\theta$-$\varepsilon$-almost period of $X$,
 \begin{eqnarray*}
  \mathbb E(I_{\tau}^{2}(t)^2)
  & \leqslant &
  \mathfrak c_{S}^{2}
  \left(\int_{-\infty}^{t}e^{-\mathfrak m_S(t - s)}ds\right)^2
  \sup_{s\in\mathbb R}
  \mathbb E(\|b(s,X(s +\tau,\theta_{-\tau}.)) - b(s,X(s,.))\|^2)\\
  & \leqslant &
  \mathfrak c_2\varepsilon^2
 \end{eqnarray*}
 with
 \begin{displaymath}
 \mathfrak c_2 :=
 \left(
 \frac{\mathfrak c_S\mathfrak c_b}{\mathfrak m_S}\right)^2.
 \end{displaymath}
 \item By Propositions \ref{isometry_fWiener} and \ref{Wiener_integral_R} together with Assumption \ref{conditions_b_sigma}.(1,3,4),
 \begin{eqnarray*}
  \mathbb E(I_{\tau}^{3}(t)^2)
  & \leqslant &
  \mathfrak c_{d,H}
  \left(\int_{-\infty}^{t}\|S(t - s)(\sigma(s +\tau) -\sigma(s))\|_{\textrm{op}}^{1/H}ds\right)^{2H}\\
  & \leqslant &
  \mathfrak c_{d,H}\mathfrak c_{S}^{2}
  \left(\int_{-\infty}^{t}e^{-\mathfrak m_S(t - s)/H}ds\right)^{2H}
  \varepsilon^2 =
  \mathfrak c_3\varepsilon^2
 \end{eqnarray*}
 with
 \begin{displaymath}
 \mathfrak c_3 :=
 \mathfrak c_{d,H}\mathfrak c_{S}^{2}\left(
 \frac{H}{\mathfrak m_S}\right)^{2H}.
 \end{displaymath}
\end{enumerate}
Therefore, by Inequality (\ref{almost_periodic_solutions_4}),
\begin{displaymath}
\mathbb E(\|\mathfrak T_{\tau}\Gamma X(t,.) -\Gamma X(t,.)\|^2)
\leqslant
3(\mathfrak c_1 +\mathfrak c_2 +\mathfrak c_3)(\varepsilon^2 + 4\alpha).
\end{displaymath}
Since one can take $\varepsilon$ and $\alpha$ such that the right hand side of the previous inequality is lower than $\varepsilon_0$, $T_{\varepsilon}$ is contained in the set of $\theta$-$\varepsilon_0$-almost periods in square mean of $\Gamma X$ as expected. In conclusion, this last set is relatively dense.
\\
\\
\emph{Step 3.} Let us establish in this step that the map
$(t,\tau)\mapsto\mathfrak T_{\tau}\Gamma X(t)$ is continuous for the
distance $d_2$.
Thanks to \cite[Proposition 3.9]{RF20}, it is sufficient to prove the continuity, for the distance $d_2$, of the map $\tau\mapsto\mathfrak T_{\tau}\Gamma X(0)$. Consider $\tau_0,\tau\in\mathbb R$ such that $|\tau -\tau_0| <\eta$ and, without loss of generality, assume that $\tau_0,\tau > 0$. By the definition of $\Gamma X$ together with Remark \ref{invariance_increments_fBm},
\begin{eqnarray*}
 \mathfrak T_{\tau}\Gamma X(0,.) -\mathfrak T_{\tau_0}\Gamma X(0,.)
 & = &
 \int_{-\infty}^{0}S(-s)(b(s +\tau,X(s +\tau,\theta_{-\tau}.))\\
 & & -b(s +\tau_0,X(s +\tau_0,\theta_{-\tau_0}.)))ds\\
 & & +
 \int_{-\infty}^{0}S(-s)(\sigma(s +\tau) -\sigma(s +\tau_0))dB(s,.).
\end{eqnarray*}
So,
\begin{equation}\label{almost_periodic_solutions_7}
\mathbb E(\|\mathfrak T_{\tau}\Gamma X(0,.) -\mathfrak T_{\tau_0}\Gamma X(0,.)\|^2)
\leqslant
3(\mathbb E(|I_{\tau,\tau_0}^{1}|^2) +\mathbb E(|I_{\tau,\tau_0}^{2}|^2) +\mathbb E(|I_{\tau,\tau_0}^{3}|^2))
\end{equation}
where
\begin{eqnarray*}
 I_{\tau,\tau_0}^{1} & := &
 \left\|\int_{-\infty}^{0}S(-s)(b(s +\tau,X(s +\tau_0,\theta_{-\tau_0}.)) - b(s +\tau_0,X(s +\tau_0,\theta_{-\tau_0}.)))ds\right\|,\\
 I_{\tau,\tau_0}^{2} & := &
 \left\|\int_{-\infty}^{0}S(-s)(b(s +\tau,X(s +\tau,\theta_{-\tau}.)) - b(s +\tau,X(s +\tau_0,\theta_{-\tau_0}.)))ds\right\|
 \textrm{ and}\\
 I_{\tau,\tau_0}^{3} & := &
 \left\|\int_{-\infty}^{0}S(-s)(\sigma(s +\tau) -\sigma(s +\tau_0))dB(s,.)\right\|.
\end{eqnarray*}
Let us find suitable bounds for $\mathbb E(|I_{\tau,\tau_0}^{1}|^2)$, $\mathbb E(|I_{\tau,\tau_0}^{2}|^2)$ and $\mathbb E(|I_{\tau,\tau_0}^{3}|^2)$.
\begin{enumerate}
 \item For every $s\in\mathbb R$, consider
 \begin{displaymath}
 b^{\tau,\tau_0}(s,.) :=
 b(s +\tau,X(s +\tau_0,\theta_{-\tau_0}.)) -
 b(s +\tau_0,X(s +\tau_0,\theta_{-\tau_0}.)).
 \end{displaymath}
 On the one hand, by (\ref{almost_periodic_solutions_3}), for any $s\in\mathbb R$,
 \begin{equation}\label{almost_periodic_solutions_8}
 \|b^{\tau,\tau_0}(s,.)\|\mathbf 1_{A_{\alpha}(\tau_0,s)}\leqslant
 \sup_{x\in K_{\alpha}}\|b(s +\tau,x) - b(s +\tau_0,x)\|^2 <\alpha.
 \end{equation}
 On the other hand, by (\ref{almost_periodic_solutions_2}),
 \begin{displaymath}
 \mathbb P(A_{\alpha}(\tau_0,s))\geqslant 1 -\eta\geqslant 1 -\alpha
 \end{displaymath}
 and then by (\ref{almost_periodic_solutions_1}),
 \begin{equation}\label{almost_periodic_solutions_9}
 \mathbb E(\|b^{\tau,\tau_0}(s,.)\|^2\mathbf 1_{\Omega\backslash A_{\alpha}(\tau_0,s)})
 \leqslant
 4\alpha.
 \end{equation}
 So, by Jensen's inequality, Assumption \ref{conditions_b_sigma}.(1), and Inequalities (\ref{almost_periodic_solutions_8}) and (\ref{almost_periodic_solutions_9}),
 \begin{displaymath}
 \mathbb E(|I_{\tau,\tau_0}^{1}|^2)\leqslant
 \frac{\mathfrak c_{S}^{2}}{\mathfrak m_S}
 \int_{-\infty}^{0}e^{\mathfrak m_Ss}\mathbb E(\|b^{\tau,\tau_0}(s,.)\|^2)ds
 \leqslant
 5\mathfrak c_1\alpha.
 \end{displaymath}
 \item By Assumption \ref{conditions_b_sigma}.(1,2) and (\ref{almost_periodic_solutions_3}),
 \begin{eqnarray*}
  \mathbb E(|I_{\tau,\tau_0}^{2}|^2)
  & \leqslant &
  \mathfrak c_{S}^{2}
  \left(\int_{-\infty}^{0}e^{\mathfrak m_Ss}ds\right)^2\\
  & &
  \times
  \sup_{s\in\mathbb R}
  \mathbb E(\|b(s,X(s +\tau,\theta_{-\tau}.)) - b(s,X(s +\tau_0,\theta_{-\tau_0}.))\|^2)\\
  & \leqslant &
  \mathfrak c_2\alpha.
 \end{eqnarray*}
 \item By Propositions \ref{isometry_fWiener} and \ref{Wiener_integral_R}, Assumption \ref{conditions_b_sigma}.(1,3) and (\ref{almost_periodic_solutions_3}),
 \begin{eqnarray*}
  \mathbb E(|I_{\tau,\tau_0}^{3}|^2)
  & \leqslant &
  \mathfrak c_{d,H}
  \left(\int_{-\infty}^{0}\|S(-s)(\sigma(s +\tau) -\sigma(s +\tau_0))\|_{\textrm{op}}^{1/H}ds\right)^{2H}\\
  & \leqslant &
  \mathfrak c_{d,H}\mathfrak c_{S}^{2}
  \left(\int_{-\infty}^{0}e^{\mathfrak m_Ss/H}ds\right)^{2H}
  \alpha =
  \mathfrak c_3\alpha.
 \end{eqnarray*}
\end{enumerate}
Therefore, by Inequality (\ref{almost_periodic_solutions_7}),
\begin{displaymath}
\mathbb E(\|\mathfrak T_{\tau}\Gamma X(0,.) -\mathfrak T_{\tau_0}\Gamma X(0,.)\|^2)
\leqslant
15(\mathfrak c_1 +\mathfrak c_2 +\mathfrak c_3)\alpha.
\end{displaymath}
Since $\alpha$ has been chosen arbitrarily close to $0$, the map $\tau\mapsto\mathfrak T_{\tau}\Gamma X(0)$ is continuous at time $\tau_0$ for the distance $d_2$.
\\
\\
\emph{Step 4.} For every $X,X^*\in\textrm{AP}^2(\Omega;\mathbb R^d)$ and $t\in\mathbb R$, by Jensen's inequality and Assumption \ref{conditions_b_sigma}.(1,2),
\begin{eqnarray*}
 \mathbb E(\|\Gamma X(t) -\Gamma X^*(t)\|^2)
 & \leqslant &
 \mathfrak c_{S}^{2}\mathbb E\left(
 \left|\int_{-\infty}^{t}e^{-\mathfrak m_S(t - s)}
 \|b(s,X(s)) - b(s,X^*(s))\|ds\right|^2\right)\\
 & \leqslant &
 \frac{\mathfrak c_{S}^{2}}{\mathfrak m_S}
 \int_{-\infty}^{t}e^{-\mathfrak m_S(t - s)}\mathbb E(\|b(s,X(s)) - b(s,X^*(s))\|^2)ds\\
 & \leqslant &
 \frac{\mathfrak c_{S}^{2}\mathfrak c_{b}^{2}}{\mathfrak m_S}
 \left(\int_{-\infty}^{t}e^{-\mathfrak m_S(t - s)}ds\right)\\
 & &
 \times\sup_{s\in\mathbb R}\mathbb E(\|X(s) - X^*(s)\|^2) =
 \mathfrak c_2\sup_{s\in\mathbb R}\mathbb E(\|X(s) - X^*(s)\|^2).
\end{eqnarray*}
Since $\mathfrak c_2 < 1$, $\Gamma$ has a unique fixed point by Picard's theorem.
\end{proof}
%
\begin{remark}[Square mean almost periodicity and
  fractional Ornstein-Uhlenbeck process]\label{rem:counterexample} 
The simplest case of Equation \eqref{main_equation}, with $d=1$, $b=0$
and where $\sigma$ is a constant (fractional Ornstein-Uhlenbeck process)
shows that ``plain'' almost periodicity in square mean
(that is, with $\theta_t=\mathrm{Id}_\Omega$ for all $t\in\mathbb{R}$)
is inapplicable for equations driven by fractional Brownian motion.
Indeed, by Cheridito et al.~\cite[Theorem 2.3]{cheridito}, the autocovariance
function of the fractional Ornstein-Uhlenbeck process decays to
0. However, this process is stationary, thus it has a constant
variance. By 
\cite[Lemma 2.3]{counterexamples}
this shows that no nontrivial fractional Ornstein-Uhlenbeck process is 
almost periodic in square mean. However, Theorem
\ref{almost_periodic_solutions} shows that it is always $\theta$-almost
periodic in square mean. 
\end{remark}

\noindent
Now, $S$, $b$ and $\sigma$ fulfill the following assumption.
%


%
\begin{assumption}\label{conditions_b_sigma+}
The functions $S : t\in\mathbb R\mapsto\exp(At)$, $b$ and $\sigma$ satisfy the four following conditions:
\begin{enumerate}
 \item There exist $\mathfrak c_S,\mathfrak m_S > 0$ such that for every $t\in\mathbb R$, $\|S(t)\|_{\normalfont{\textrm{op}}}\leqslant\mathfrak c_Se^{-\mathfrak m_St}$.
 \item There exist $\mathfrak c_b,\mathfrak m_b > 0$ such that for every $t\in\mathbb R$ and $x,y\in\mathbb R^d$,
 \begin{displaymath}
 \|b(t,x) - b(t,y)\|\leqslant\mathfrak c_b\|x - y\|
 \textrm{ and }
 \|b(t,x)\|\leqslant\mathfrak m_b(1 +\|x\|).
 \end{displaymath}
 \item For every $t\in\mathbb R$, $S(t -\cdot)\sigma(\cdot)\mathbf 1_{]-\infty,t]}(\cdot)\in|\mathcal H|_d$.
 \item There exists $\tau > 0$ such that $b(.,x)$ (resp. $\sigma$) is $\tau$-periodic for every $x\in\mathbb R^d$ (resp. $\tau$-periodic).
\end{enumerate}
\end{assumption}
\noindent
Assumption \ref{conditions_b_sigma+} is stronger than Assumption \ref{conditions_b_sigma} because its fourth item deals with periodicity of the vector field of Equation (\ref{main_equation}) instead of almost periodicity.
\\
\\
Under Assumption \ref{conditions_b_sigma+}, the proof of the following
proposition remains the same as that of Theorem \ref{almost_periodic_solutions} by taking $\varepsilon_0 = 0$.
%


%
\begin{proposition}\label{periodic_solutions}
Under Assumption \ref{conditions_b_sigma+}, if
\begin{displaymath}
\frac{\mathfrak c_S\mathfrak c_b}{\mathfrak m_S} < 1,
\end{displaymath}
then Equation (\ref{main_equation}) has a unique continuous, uniformly bounded and $\theta$-$\tau$-periodic solution.
\end{proposition}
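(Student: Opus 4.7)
The plan is to adapt the four-step template of the proof of Theorem \ref{almost_periodic_solutions}, exploiting the fact that $\tau$ is now a genuine period rather than only an approximate one. Let $\textrm{AP}^2_{\tau}(\Omega;\mathbb R^d)$ denote the subspace of $\textrm{AP}^2(\Omega;\mathbb R^d)$ consisting of $\theta$-$\tau$-periodic processes. Since Assumption \ref{conditions_b_sigma+} strictly implies Assumption \ref{conditions_b_sigma}, Theorem \ref{almost_periodic_solutions} already provides a unique fixed point $X^*$ of $\Gamma$ in the larger space $\textrm{AP}^2(\Omega;\mathbb R^d)$, which is the unique continuous, uniformly bounded and $\theta$-almost periodic in square mean solution of Equation (\ref{main_equation}). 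The statement thus reduces to showing that $\Gamma$ preserves $\textrm{AP}^2_{\tau}(\Omega;\mathbb R^d)$; this subspace is closed in the ambient complete metric, so Banach's theorem will place the unique fixed point in it, yielding the desired $\theta$-$\tau$-periodic solution.

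The core verification is the identity $\mathfrak T_{\tau}\Gamma X=\Gamma X$ for every $X\in\textrm{AP}^2_{\tau}(\Omega;\mathbb R^d)$. Performing the change of variable $s\mapsto s+\tau$ combined with Remark \ref{invariance_increments_fBm} exactly as in Step 2 of the proof of Theorem \ref{almost_periodic_solutions}, one rewrites $\mathfrak T_{\tau}\Gamma X(t,\omega)$ as the sum of the Lebesgue integral $\int_{-\infty}^{t}S(t-s)b(s+\tau,X(s+\tau,\theta_{-\tau}\omega))\,ds$ and the Wiener integral $\int_{-\infty}^{t}S(t-s)\sigma(s+\tau)\,dB(s,\omega)$. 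The three "error" terms $I_{\tau}^{1}(t)$, $I_{\tau}^{2}(t)$, $I_{\tau}^{3}(t)$ arising in that step now vanish identically, not just up to $\varepsilon$: $I_{\tau}^{1}(t)=0$ because $b(\cdot,x)$ is $\tau$-periodic for every $x\in\mathbb R^d$; $I_{\tau}^{2}(t)=0$ because the $\theta$-$\tau$-periodicity of $X$ gives $X(s+\tau,\theta_{-\tau}\,\cdot)=X(s,\cdot)$ almost surely; and $I_{\tau}^{3}(t)=0$ because $\sigma$ is $\tau$-periodic. Inequality (\ref{almost_periodic_solutions_4}) with $\varepsilon_0=0$ then gives $\mathfrak T_{\tau}\Gamma X(t)=\Gamma X(t)$ in $\mathbb L^2(\Omega;\mathbb R^d)$ for every $t$, and continuity in $t$ of both processes upgrades this to the almost-sure equality of trajectories required by Definition \ref{almost periodic_processes}.(3).

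The contraction estimate of Step 4 of the proof of Theorem \ref{almost_periodic_solutions} goes through verbatim: under $\mathfrak c_S\mathfrak c_b/\mathfrak m_S<1$ one has $\mathfrak c_2<1$, so $\Gamma$ is a strict contraction for the supremum-in-$t$ of the $\mathbb L^2$ distance, and the conclusion follows from Picard's theorem applied in the closed invariant subspace $\textrm{AP}^2_{\tau}(\Omega;\mathbb R^d)$; uniqueness in the ambient $\textrm{AP}^2(\Omega;\mathbb R^d)$ identifies this fixed point with $X^*$. The only genuine obstacle is the invariance $\Gamma(\textrm{AP}^2_{\tau})\subset\textrm{AP}^2_{\tau}$, and even that is a bookkeeping exercise once Remark \ref{invariance_increments_fBm} is invoked: no new analytic estimate beyond those already assembled in the proof of Theorem \ref{almost_periodic_solutions} is needed, which is precisely what is meant by "taking $\varepsilon_0=0$".
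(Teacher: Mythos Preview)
Your proposal is correct and follows essentially the same approach the paper indicates: the paper's proof is literally the sentence ``the proof remains the same as that of Theorem \ref{almost_periodic_solutions} by taking $\varepsilon_0=0$,'' and you have unpacked exactly what this means---the periodicity of $b(\cdot,x)$, $\sigma$, and $X$ forces the three error terms $I_{\tau}^{1}$, $I_{\tau}^{2}$, $I_{\tau}^{3}$ of Step~2 to vanish identically, so $\mathfrak T_{\tau}\Gamma X=\Gamma X$, and the contraction of Step~4 finishes. Your organisation via the closed invariant subspace $\textrm{AP}^2_{\tau}(\Omega;\mathbb R^d)$ and uniqueness in the ambient $\textrm{AP}^2(\Omega;\mathbb R^d)$ is a mild and perfectly legitimate repackaging of the same argument.
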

%


%
\section{Consistency of an estimator of the parameter $\vartheta$ in Equation (\ref{main_equation_estimation})}\label{section_parameter_estimation}
Throughout this section, the parameter $\vartheta$ involved in
Equation (\ref{main_equation_estimation}) belongs to
$[\underline\vartheta,\infty[$ with $\underline\vartheta >
0$. Moreover, the function $b_0$ fulfills the following assumption. 
\begin{assumption}\label{assumption_b_0}
The functions $b_0(t,.)$, $t\in\mathbb R_+$ belong to $C^1(\mathbb R;\mathbb R)\backslash\{\textrm{Id}_{\mathbb R}\}$ and there exists $\underline{\mathfrak m}_{b_0},\overline{\mathfrak m}_{b_0}\in ]0,1[$ such that
\begin{displaymath}
-\overline{\mathfrak m}_{b_0}
\leqslant
\partial_2b_0(t,x)\leqslant 1 -\underline{\mathfrak m}_{b_0}
\end{displaymath}
for every $(t,x)\in\mathbb R_+\times\mathbb R$.
\end{assumption}
\noindent
For instance, assume that
\begin{displaymath}
b_0(t,x) := u(t)v(x)
\textrm{ $;$ }
\forall (t,x)\in\mathbb R^2,
\end{displaymath}
where $u :\mathbb R\rightarrow\mathbb R$ is a continuous almost periodic function and $v\in C^1(\mathbb R;\mathbb R)$. If
\begin{displaymath}
u(t)v(x)\not= x
\textrm{ $;$ }
\forall (t,x)\in\mathbb R_+\times\mathbb R
\end{displaymath}
and
\begin{displaymath}
\exists\varepsilon > 0 :
\{u(t)v'(x)\textrm{ $;$ }(t,x)\in\mathbb R_+\times\mathbb R\}\subset ]-1 +\varepsilon,1 -\varepsilon[,
\end{displaymath}
then $b = -\vartheta b_0$ (resp. $b_0$) fulfils Assumption \ref{conditions_b_sigma}.(2,4) (resp. \ref{assumption_b_0}).
\\
\\
\textbf{Practical examples:}
\begin{enumerate}
 \item If
 \begin{displaymath}
 b_0(t,x) :=
 \frac{1}{4}(\cos(t) +\sin(\sqrt 2\cdot t))x
 \textrm{ $;$ }
 \forall (t,x)\in\mathbb R^2,
 \end{displaymath}
 then $b = -\vartheta b_0$ (resp. $b_0$) fulfils Assumption \ref{conditions_b_sigma}.(2,4) (resp. \ref{assumption_b_0}).
 \item If
 \begin{displaymath}
 b_0(t,x) :=
 \frac{1}{4}(\sin(t) +\cos(\sqrt 2\cdot t))\arctan(x)
 \textrm{ $;$ }
 \forall (t,x)\in\mathbb R^2,
 \end{displaymath}
 then $b = -\vartheta b_0$ (resp. $b_0$) fulfils Assumption \ref{conditions_b_sigma}.(2,4) (resp. \ref{assumption_b_0}).
 \item If
 \begin{eqnarray*}
  b_0(t,x) & := &
  \frac{1}{8}(\cos(t) +\sin(\sqrt 2\cdot t))x\\
  & &
  +\frac{1}{8}(\sin(t) +\cos(\sqrt 2\cdot t))\arctan(x)
  \textrm{ $;$ }
  \forall (t,x)\in\mathbb R^2,
 \end{eqnarray*}
 then $b = -\vartheta b_0$ (resp. $b_0$) fulfils Assumption \ref{conditions_b_sigma}.(2,4) (resp. \ref{assumption_b_0}).
 \item If
 \begin{displaymath}
 b_0(t,x) :=
 \frac{1}{2}\cos(t)x
 \textrm{ $;$ }
 \forall (t,x)\in\mathbb R^2,
 \end{displaymath}
 then $b = -\vartheta b_0$ (resp. $b_0$) fulfils Assumption \ref{conditions_b_sigma+}.(2,4) (resp. \ref{assumption_b_0}).
\end{enumerate}
Note that under Assumption \ref{assumption_b_0}, $\mathfrak c_b =\vartheta [(1 -\underline{\mathfrak m}_{b_0})\vee\overline{\mathfrak m}_{b_0}]$. So, under Assumption \ref{conditions_b_sigma} (resp. \ref{conditions_b_sigma+}), since $\mathfrak c_S = 1$ and $\mathfrak m_S =\vartheta$,
\begin{displaymath}
\frac{\mathfrak c_S\mathfrak c_b}{\mathfrak m_S} =
(1 -\underline{\mathfrak m}_{b_0})\vee\overline{\mathfrak m}_{b_0} < 1,
\end{displaymath}
and then Equation (\ref{main_equation_estimation}) has a unique almost periodic (resp. periodic) solution by Theorem \ref{almost_periodic_solutions} (resp. Proposition \ref{periodic_solutions}).
%


%
\begin{remark}\label{about_assumption_b_0}
Let us give some details about Assumption \ref{assumption_b_0}. On the
one hand, the assumption
\begin{displaymath}
\partial_2b_0(t,x)\leqslant 1 -\underline{\mathfrak m}_{b_0}
\textrm{ $;$ }
\forall (t,x)\in\mathbb R_+\times\mathbb R
\end{displaymath}
is crucial in order to prove Lemma \ref{control_divergence_integral}. This condition on $\partial_2b_0$ plays the same role for Equation (\ref{main_equation_estimation}) than Hu et al. \cite{HNZ18}, Hypothesis 1.1 for autonomous equations. On the other hand, as seen above, to assume that $\underline{\mathfrak m}_{b_0}$ and $\overline{\mathfrak m}_{b_0}$ belong to $]0,1[$ allows to show that
\begin{displaymath}
\frac{\mathfrak c_S\mathfrak c_b}{\mathfrak m_S} =
(1 -\underline{\mathfrak m}_{b_0})\vee\overline{\mathfrak m}_{b_0} < 1,
\end{displaymath}
and then to apply Theorem \ref{almost_periodic_solutions} and Proposition \ref{periodic_solutions}. This last condition can be improved by replacing the drift function $(t,x)\mapsto -\vartheta (x - b_0(t,x))$ by $(t,x)\mapsto -\vartheta (m_0x - b_0(t,x))$ with a known $m_0\geqslant 1$. For the sake of readability, $m_0 = 1$ in this paper, but the case $m_0 > 1$ doesn't generate additional difficulties and is left to the reader.
\end{remark}
\noindent
Under Assumptions \ref{conditions_b_sigma} or \ref{conditions_b_sigma+}, and Assumption \ref{assumption_b_0}, the purpose of this section is to establish the consistency of the least-square type estimator
\begin{displaymath}
\widehat\vartheta_T :=
-\frac{\displaystyle{\int_{0}^{T}(X(s) - b_0(s,X(s)))\delta X(s)}}{\displaystyle{\int_{0}^{T}(X(s) - b_0(s,X(s)))^2ds}}
\textrm{ $;$ }T > 0
\end{displaymath}
of $\vartheta$, where the Skorokhod integral with respect to the solution $X$ to Equation (\ref{main_equation_estimation}) is defined by
\begin{displaymath}
\int_{0}^{t}
Y(s)\delta X(s) :=
-\vartheta\int_{0}^{t}Y(s)(X(s) - b_0(s,X(s)))ds +
\int_{0}^{t}Y(s)\sigma(s)\delta B(s)
\end{displaymath}
for any continuous process $Y$ and every $t\in [0,T]$ such that $Y\sigma\mathbf 1_{[0,t]}\in\textrm{dom}(\delta)$.
%


%
\begin{remark}\label{computability_estimator}
Note that except in the case $H = 1/2$ because the Skorokhod integral coincides with It\^o's integral on its domain, the estimator $\widehat{\vartheta}_T$ is difficult to compute. However, in some recent papers (see Comte and Marie \cite{CM19,CM20}), the authors proposed a procedure to compute Skorokhod's integral based estimators requiring an observed path of the solution for two close but different values of the initial condition. Clearly, such a requirement is not possible in any context, but the authors had in mind the pharmacokinetics application field and explained why it is meaningful in this context. Since Equation (\ref{main_equation}) is defined on $\mathbb R$, the procedure of \cite{CM19,CM20} cannot be transposed directly to our estimator $\widehat{\vartheta}_T$, but an extension will be investigated in a forthcoming work.
\end{remark}
\noindent
Let $C_{\mathbf b}^{1}(\mathbb R_+\times\mathbb R,\mathbb R)$ be the subspace of $C^0(\mathbb R_+\times\mathbb R,\mathbb R)$ such that $\varphi\in C_{\mathbf b}^{1}(\mathbb R_+\times\mathbb R,\mathbb R)$ if and only if, for every $t\in\mathbb R_+$, $\varphi(t,.)$ belongs to $C^1(\mathbb R;\mathbb R)$ and $\partial_2\varphi$ is bounded.
\\
\\
The following lemma is similar to Hu et al.~\cite[Proposition 4.4]{HNZ18}.
%


%
\begin{lemma}\label{control_divergence_integral}
Under Assumptions \ref{conditions_b_sigma} and \ref{assumption_b_0}, there exists a deterministic constant $\mathfrak c_{H,\sigma,\underline\vartheta} > 0$, only depending on $H$, $\|\sigma\|_{\infty}$ and $\underline\vartheta$, such that for every $\varphi\in C_{\mathbf b}^{1}(\mathbb R_+\times\mathbb R,\mathbb R)$,
\begin{eqnarray*}
 \mathbb E\left(
 \left|\int_{0}^{T}\varphi(s,X(s))\delta B(s)\right|^2\right)
 & \leqslant &
 \mathfrak c_{H,\sigma,\underline\vartheta}\left[\left(\int_{0}^{T}\mathbb E(|\varphi(s,X(s))|^{1/H})ds\right)^{2H}\right.\\
 & &
 +\left.
 \left(\int_{0}^{T}\mathbb E(|\partial_2\varphi(s,X(s))|^2)^{1/(2H)}ds\right)^{2H}\right] <\infty.
\end{eqnarray*}
\end{lemma}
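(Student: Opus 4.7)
The plan is to start from a variance estimate for Skorokhod integrals with respect to fractional Brownian motion, namely
\begin{displaymath}
\mathbb{E}[|\delta(u)|^{2}] \leq \mathbb{E}\|u\|_{\mathfrak{H}}^{2} + \mathbb{E}\|\mathbf{D}u\|_{\mathfrak{H}\otimes\mathfrak{H}}^{2},
\end{displaymath}
applied to $u(s) := \varphi(s,X(s))\mathbf{1}_{[0,T]}(s)$, and then to control each of the two resulting expectations by combining the deterministic embedding $\|f\|_{\mathfrak{H}}^{2} \leq c_{H}(\int_{0}^{T}|f(s)|^{1/H}ds)^{2H}$ from \cite{MMV01} with Malliavin chain-rule computations for $\mathbf{D}u$. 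The first summand is handled by this embedding together with a Minkowski-type interchange of the $L^{2}(\Omega)$ and $L^{1/H}([0,T])$ norms, valid since $1/H < 2$ when $H > 1/2$; this directly produces the first term of the right-hand side of the lemma, up to absorbing constants into $\mathfrak{c}_{H,\sigma,\underline\vartheta}$.

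For the second summand, I would invoke the Malliavin chain rule to write $\mathbf{D}_{r}u(s) = \partial_{2}\varphi(s,X(s))\mathbf{D}_{r}X(s)$ and then obtain an explicit formula for $\mathbf{D}_{r}X(s)$. Rewriting the mild form of Equation (\ref{main_equation_estimation}) from time $0$ onwards,
\begin{displaymath}
X(t) = e^{-\vartheta t}X(0) + \vartheta\int_{0}^{t}e^{-\vartheta(t-s)}b_{0}(s,X(s))ds + \int_{0}^{t}e^{-\vartheta(t-s)}\sigma(s)dB(s),
\end{displaymath}
and observing that $X(0)$ is measurable with respect to $B|_{(-\infty,0]}$ so that $\mathbf{D}_{r}X(0) = 0$ for $r \in [0,T]$, Malliavin differentiation reduces the problem to the linear ODE $\partial_{t}\mathbf{D}_{r}X(t) = -\vartheta(1 - \partial_{2}b_{0}(t,X(t)))\mathbf{D}_{r}X(t)$ for $t \geq r$ with initial value $\sigma(r)$. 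Solving this ODE explicitly gives
\begin{displaymath}
\mathbf{D}_{r}X(t) = \sigma(r)\exp\!\Big(-\vartheta\int_{r}^{t}(1-\partial_{2}b_{0}(u,X(u)))du\Big)\mathbf{1}_{\{r\leq t\}},
\end{displaymath}
and Assumption \ref{assumption_b_0}, which yields $1 - \partial_{2}b_{0} \geq \underline{\mathfrak{m}}_{b_{0}} > 0$ pointwise, then provides the uniform-in-$\omega$ bound $|\mathbf{D}_{r}X(t)| \leq \|\sigma\|_{\infty}\,e^{-\underline\vartheta\,\underline{\mathfrak{m}}_{b_{0}}(t-r)}\mathbf{1}_{\{r\leq t\}}$.

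Plugging this pointwise bound into a double application of the \cite{MMV01} embedding to $\|\mathbf{D}u\|_{\mathfrak{H}\otimes\mathfrak{H}}^{2}$ and carrying out the $r$-integration using the exponential decay collapses the double integral to $C\bigl(\int_{0}^{T}|\partial_{2}\varphi(s,X(s))|^{1/H}ds\bigr)^{2H}$, with a finite constant $C = C(H,\|\sigma\|_{\infty},\underline\vartheta)$ coming from $\int_{0}^{\infty}e^{-\underline\vartheta\,\underline{\mathfrak{m}}_{b_{0}}r/H}dr < \infty$. A final Minkowski integral inequality (again exploiting $1/H \leq 2$) then converts the random $L^{1/H}$-integrand into the expected form $(\mathbb{E}|\partial_{2}\varphi(s,X(s))|^{2})^{1/(2H)}$, yielding the second term of the right-hand side.

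The main obstacle will be the rigorous justification that $u \in \mathbb{D}^{1,2}(\mathfrak{H})$ and the derivation of the formula for $\mathbf{D}_{r}X(s)$, because $X$ is defined as a mild solution over $(-\infty,T]$ whereas the Malliavin calculus of Section \ref{section_preliminaries} is set up on $[0,T]$. This requires an approximation argument: for instance a Picard iteration on $[0,T]$ initialized with $X(0)$, which is measurable with respect to $B|_{(-\infty,0]}$ and hence has vanishing Malliavin derivative on $[0,T]$, then differentiating each iterate and passing to the limit using closability of $\mathbf{D}$ together with the bounded-derivative hypotheses on $b_{0}$ and on $\varphi$. Once this analytic set-up is in place, the explicit linear-ODE expression for $\mathbf{D}X$ makes the remaining steps essentially quantitative, with the dissipativity condition $\partial_{2}b_{0} \leq 1 - \underline{\mathfrak{m}}_{b_{0}}$ being precisely what produces the deterministic constant $\mathfrak{c}_{H,\sigma,\underline\vartheta}$ independently of $T$.
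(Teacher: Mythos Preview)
Your proposal is correct and follows essentially the same route as the paper: compute $\mathbf D_r X(t)$ via the chain rule, solve the resulting linear ODE to get the explicit exponential expression, use Assumption~\ref{assumption_b_0} to obtain the uniform bound $|\mathbf D_r X(t)|\leqslant\|\sigma\|_\infty e^{-\underline\vartheta\,\underline{\mathfrak m}_{b_0}(t-r)}\mathbf 1_{\{r\leqslant t\}}$, and feed this into a second-moment estimate for the Skorokhod integral. The only difference is packaging: the paper invokes \cite[Theorem 3.6.(2) and Proposition 4.4]{HNZ18} as black boxes for the variance inequality and the subsequent reduction, whereas you unpack that same machinery from the Meyer-type bound $\mathbb E|\delta(u)|^2\leqslant\mathbb E\|u\|_{\mathfrak H}^2+\mathbb E\|\mathbf Du\|_{\mathfrak H\otimes\mathfrak H}^2$ together with the $L^{1/H}\hookrightarrow\mathfrak H$ embedding of \cite{MMV01} and Minkowski; note, though, that your Minkowski step naturally yields $\bigl(\int_0^T(\mathbb E|\varphi|^2)^{1/(2H)}ds\bigr)^{2H}$ for the first term rather than $\bigl(\int_0^T\mathbb E|\varphi|^{1/H}ds\bigr)^{2H}$, which is a slightly larger quantity but equally sufficient for the application.
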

%


%
\begin{proof}
On the one hand, for any $s,t\in [0,T]$, by the chain rule for Malliavin's derivative,
\begin{displaymath}
\mathbf D_sX(t) =
\sigma(s)\mathbf 1_{[0,t]}(s)
-\vartheta\int_{0}^{t}
(1 -\partial_2b_0(u,X(u)))\mathbf D_sX(u)du.
\end{displaymath}
Then,
\begin{displaymath}
\mathbf D_sX(t) =
\sigma(s)\mathbf 1_{[0,t]}(s)\exp\left(-\vartheta\int_{s}^{t}(1 -\partial_2b_0(u,X(u)))du\right)
\end{displaymath}
and, by Assumption \ref{assumption_b_0},
\begin{equation}\label{control_divergence_integral_1}
|\mathbf D_sX(t)|
\leqslant
\|\sigma\|_{\infty}
\mathbf 1_{[0,t]}(s)
e^{-\underline\vartheta\cdot\underline{\mathfrak m}_{b_0}(t - s)}.
\end{equation}
On the other hand, by Hu et al.~\cite[Theorem 3.6.(2)]{HNZ18}, there exists a deterministic constant $\mathfrak c_H > 0$, depending only on $H$, such that for any $\varphi\in C_{\mathbf b}^{1}(\mathbb R_+\times\mathbb R,\mathbb R)$,
\begin{eqnarray}
 \mathbb E\left(
 \left|\int_{0}^{T}\varphi(u,X(u))\delta B(u)\right|^2\right)
 & \leqslant &
 \mathfrak c_H\left[\left(\int_{0}^{T}\mathbb E(|\varphi(u,X(u))|^{1/H})du\right)^{2H}\right.
 \nonumber\\
 \label{control_divergence_integral_2}
 & &
 +\left.
 \mathbb E\left(\int_{0}^{T}
 \int_{0}^{u}|\mathbf D_v[\varphi(u,X(u))]|^{1/H}dvdu\right)^{2H}\right].
\end{eqnarray}
As in the proof of Hu et al.~\cite[Proposition 4.4]{HNZ18}, Inequalities (\ref{control_divergence_integral_1}) and (\ref{control_divergence_integral_2}) allow to conclude.
\end{proof}
\noindent
Now, let us establish the consistency of the estimator $\widehat\vartheta_T$ under Assumption \ref{conditions_b_sigma+} (periodic case), and then under Assumption \ref{conditions_b_sigma} (almost periodic case). Lemma \ref{mean_value_periodic} is a little bit stronger than Lemma \ref{mean_value_almost_periodic}, and to investigate the periodic case first helps to understand the almost periodic one.
%


%
\subsection{Consistency of $\widehat\vartheta_T$: periodic case}
For every $\tau > 0$, consider
\begin{displaymath}
\textrm{Per}_{\tau}(\Omega;\mathbb R) :=
\{Y\in\textrm{AP}^1(\Omega;\mathbb R) :
Y\textrm{ is $\theta$-$\tau$-periodic}\}.
\end{displaymath}
The following lemma is a mean value theorem for the elements of $\textrm{Per}_{\tau}(\Omega;\mathbb R)$.
%


%
\begin{lemma}\label{mean_value_periodic}
For every $\tau > 0$ and $Y\in\normalfont{\textrm{Per}}_{\tau}(\Omega;\mathbb R)$,
\begin{displaymath}
\frac{1}{t}\int_{0}^{t}Y(s)ds
\xrightarrow[t\rightarrow\infty]{\normalfont{\textrm{a.s.}}/\mathbb L^2}
\frac{1}{\tau}\int_{0}^{\tau}\mathbb E(Y(s))ds.
\end{displaymath}
\end{lemma}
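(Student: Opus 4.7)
The plan is to reduce the continuous-time average to a discrete Birkhoff-type average along the orbit of $\theta_\tau$. Iterating the $\theta$-$\tau$-periodicity relation $\mathfrak T_\tau Y=Y$ gives $Y(t+n\tau,\omega)=Y(t,\theta_\tau^n\omega)$ for every $n\in\mathbb N$, so setting $h(\omega):=\int_0^\tau Y(u,\omega)\,du$, the block integrals $Z_n:=\int_{n\tau}^{(n+1)\tau}Y(s)\,ds$ satisfy $Z_n=h\circ\theta_\tau^n$ and form a strictly stationary sequence in $\mathbb L^1(\Omega)$ (and in $\mathbb L^2$ by Jensen's inequality, using the uniform $\mathbb L^2$-bound on $Y$ inherited from $\mathrm{AP}^2\supset\mathrm{Per}_\tau$ in the applications considered).

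Next I would invoke ergodic theorems. The Wiener shift $\theta$ on the fBm probability space is strongly mixing, so every $\theta_\tau$ with $\tau>0$ is itself ergodic. Birkhoff's pointwise and von Neumann's mean ergodic theorems applied to $h$ then yield
\[
\frac{1}{N}\sum_{n=0}^{N-1}Z_n\xrightarrow[N\to\infty]{\mathrm{a.s.},\,\mathbb L^2}\mathbb E(h)=\int_0^\tau\mathbb E(Y(s))\,ds.
\]

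To pass from discrete to continuous averages, for $t>0$ set $N=\lfloor t/\tau\rfloor$ and $r=t-N\tau\in[0,\tau)$, and split
\[
\frac{1}{t}\int_0^t Y(s)\,ds=\frac{N\tau}{t}\cdot\frac{1}{N\tau}\sum_{n=0}^{N-1}Z_n+\frac{1}{t}\int_{N\tau}^{t}Y(s)\,ds.
\]
Since $N\tau/t\to 1$, the first summand converges (a.s.\ and in $\mathbb L^2$) to $\frac{1}{\tau}\int_0^\tau\mathbb E(Y(s))\,ds$ by the previous step. For the tail, the periodicity relation on $[N\tau,t]$ yields $|\int_{N\tau}^t Y(s,\omega)\,ds|\leqslant h^*(\theta_\tau^N\omega)$ with $h^*(\omega):=\int_0^\tau|Y(u,\omega)|\,du$; applying Birkhoff to $h^*$ gives $h^*(\theta_\tau^N\cdot)/N\to 0$ both a.s.\ and in $\mathbb L^p$, so after division by $t\sim N\tau$ this contribution vanishes.

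The main obstacle is the justification of ergodicity of each time-$\tau$ map $\theta_\tau$: ergodicity of the continuous flow $\theta$ does not in itself entail this, but the strong mixing of the Wiener shift associated to fBm delivers total ergodicity, and hence a deterministic limit in the discrete Birkhoff and von~Neumann conclusions applied to $h$ and $h^*$.
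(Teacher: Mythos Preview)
Your argument is correct and follows essentially the same route as the paper: use $\theta$-$\tau$-periodicity to rewrite the time average as a Birkhoff average along iterates of $\theta_\tau$, then apply the ergodic theorem (the paper applies it pointwise in $s\in[0,\tau]$ and then integrates over $s$ via dominated convergence, whereas you integrate the block first and apply Birkhoff once to $h$). Your explicit handling of the tail $[N\tau,t]$ and your observation that one needs ergodicity of the discrete map $\theta_\tau$---obtained here from mixing of the Wiener shift---rather than merely of the flow are points the paper passes over, writing ``without loss of generality $t=n\tau$'' for the former and citing Maslowski--Schmalfuss without further comment for the latter.
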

%


%
\begin{proof}
Consider $\tau > 0$ and $Y\in\textrm{Per}_{\tau}(\Omega;\mathbb R)$. Without loss of generality, by taking $t = n\tau$ with $n\in\mathbb N^*$,
\begin{eqnarray*}
 \frac{1}{t}\int_{0}^{t}Y(s,.)ds  & = &
 \frac{1}{t}\sum_{k = 0}^{n - 1}\int_{k\tau}^{(k + 1)\tau}Y(s,.)ds\\
 & = &
 \frac{1}{n\tau}\sum_{k = 0}^{n - 1}\int_{0}^{\tau}Y(s + k\tau,\theta_{-k\tau}(\theta_{k\tau}.))ds
 =\frac{1}{\tau}\int_{0}^{\tau}M_{n}^{\tau}(s,.)ds
\end{eqnarray*}
where
\begin{displaymath}
M_{n}^{\tau}(s,.) :=
\frac{1}{n}\sum_{k = 0}^{n - 1}Y(s,\theta_{k\tau}.)
\textrm{ $;$ }
\forall s\in\mathbb R_+.
\end{displaymath}
Since $(\Omega,\mathcal A,\mathbb P,\theta)$ is an ergodic metric dynamical system (see Maslowski and Schmalfuss \cite{MS04}), by Birkhoff's ergodic theorem,
\begin{displaymath}
M_{n}^{\tau}(s)
\xrightarrow[n\rightarrow\infty]{\textrm{a.s.}/\mathbb L^2}
\mathbb E(Y(s))
\textrm{ $;$ }
\forall s\in\mathbb R_+.
\end{displaymath}
Moreover, since it belongs to $\textrm{Per}_{\tau}(\Omega;\mathbb R)$, the process $Y$ is bounded. So, by Lebesgue's theorem,
\begin{displaymath}
\frac{1}{\tau}\int_{0}^{\tau}M_{n}^{\tau}(s)ds
\xrightarrow[n\rightarrow\infty]{\textrm{a.s.}/\mathbb L^2}
\frac{1}{\tau}\int_{0}^{\tau}
\mathbb E(Y(s))ds.
\end{displaymath}
This concludes the proof.
\end{proof}
\noindent
Note that the preceding lemma obviously holds if $Y$ is a finite sum
of $\theta$-periodic processes.
%


%
\begin{proposition}\label{consistency_LS_estimator_periodic}
Under Assumptions \ref{conditions_b_sigma+} and \ref{assumption_b_0}, $\widehat\vartheta_T$ is a consistent estimator of $\vartheta$.
\end{proposition}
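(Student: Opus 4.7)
The plan is to decompose $\widehat\vartheta_T-\vartheta$ as a ratio and analyse numerator and denominator separately. Setting $\varphi(s,x):=x-b_0(s,x)$ and substituting the definition of $\int_0^T \varphi(s,X(s))\delta X(s)$ from the drift/diffusion decomposition of $\delta X$, the estimator immediately rewrites as
\begin{displaymath}
\widehat\vartheta_T-\vartheta=-\frac{N_T}{D_T},\quad N_T:=\frac{1}{T}\int_0^T \varphi(s,X(s))\sigma(s)\delta B(s),\quad D_T:=\frac{1}{T}\int_0^T \varphi(s,X(s))^2\,ds.
\end{displaymath}
It therefore suffices to show that $D_T$ converges a.s.\ to a strictly positive limit and that $N_T$ converges to $0$ in $\mathbb L^2$.

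For $D_T$, the key observation is that $s\mapsto\varphi(s,X(s))^2$ is $\theta$-$\tau$-periodic: Proposition~\ref{periodic_solutions} provides $\mathfrak T_\tau X=X$ and Assumption~\ref{conditions_b_sigma+} gives $b_0(\cdot+\tau,x)=b_0(\cdot,x)$, so the composition/translation compatibility yields $\mathfrak T_\tau(\varphi(\cdot,X(\cdot))^2)=\varphi(\cdot,X(\cdot))^2$. Combined with continuity and with the uniform $\mathbb L^2$-boundedness of $X$ (which, through the linear growth of $b_0$, transfers to $\varphi(\cdot,X(\cdot))^2$), this places the process in $\textrm{Per}_\tau(\Omega;\mathbb R)$, so Lemma~\ref{mean_value_periodic} yields
\begin{displaymath}
D_T\xrightarrow[T\rightarrow\infty]{\textrm{a.s.}/\mathbb L^2}\ell:=\frac{1}{\tau}\int_0^\tau \mathbb E[\varphi(s,X(s))^2]\,ds.
\end{displaymath}
The strict positivity $\ell>0$ follows from Assumption~\ref{assumption_b_0}: since $\partial_2\varphi\geqslant\underline{\mathfrak m}_{b_0}>0$, the map $x\mapsto\varphi(s,x)$ is strictly increasing and hence vanishes at most once; if $\ell=0$ then $\varphi(\cdot,X)\equiv 0$, which reduces Equation~(\ref{main_equation_estimation}) to $dX(s)=\sigma(s)dB(s)$ and contradicts the uniform $\mathbb L^2$-boundedness of $X$ (provided $\sigma\not\equiv 0$).

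For $N_T$, set $\tilde\varphi(s,x):=\varphi(s,x)\sigma(s)$. Its derivative $\partial_2\tilde\varphi=(1-\partial_2 b_0)\sigma$ is bounded thanks to Assumption~\ref{assumption_b_0} and to the boundedness of the periodic function $\sigma$, so $\tilde\varphi\in C_{\mathbf b}^1(\mathbb R_+\times\mathbb R,\mathbb R)$ and Lemma~\ref{control_divergence_integral} applies. Since $1/H<2$ and $X$ is uniformly $\mathbb L^2$-bounded, the suprema $\sup_s\mathbb E[|\tilde\varphi(s,X(s))|^{1/H}]$ and $\sup_s\mathbb E[|\partial_2\tilde\varphi(s,X(s))|^2]$ are finite, so the lemma gives
\begin{displaymath}
\mathbb E\left[\left|\int_0^T\tilde\varphi(s,X(s))\delta B(s)\right|^2\right]=O(T^{2H}),
\end{displaymath}
whence $\mathbb E[N_T^2]=O(T^{2H-2})\rightarrow 0$ as $T\rightarrow\infty$ since $H<1$. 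Combining with the a.s.\ convergence of $D_T$ to $\ell>0$ through Slutsky's lemma then yields $\widehat\vartheta_T\rightarrow\vartheta$ in probability.

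The main obstacle, in my view, is the strict positivity of $\ell$: it is not automatic from Lemma~\ref{mean_value_periodic} and relies on the non-degeneracy argument based on the strict monotonicity of $\varphi(s,\cdot)$ and the non-triviality of $\sigma$. Everything else is a routine combination of Lemmas~\ref{mean_value_periodic} and~\ref{control_divergence_integral}.
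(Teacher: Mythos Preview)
Your proposal is correct and follows essentially the same route as the paper: the same decomposition $\widehat\vartheta_T=\vartheta-N_T/D_T$, Lemma~\ref{control_divergence_integral} for the numerator giving the $O(T^{2H-2})$ rate, Lemma~\ref{mean_value_periodic} for the denominator, and Slutsky to conclude. The only notable difference is that the paper simply asserts the strict positivity of the limit of $D_T$ without justification in the periodic case (a Parseval-type argument is given only in the almost periodic proof), whereas you supply an explicit non-degeneracy argument based on the strict monotonicity of $x\mapsto\varphi(s,x)$ and the uniform $\mathbb L^2$-boundedness of $X$; your caveat that this requires $\sigma\not\equiv 0$ is accurate and worth noting.
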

%


%
\begin{proof}
First of all, note that $\widehat\vartheta_T =\vartheta - U_T/V_T$, where
\begin{displaymath}
U_T :=
\frac{1}{T}
\int_{0}^{T}(X(s) - b_0(s,X(s)))\sigma(s)\delta B(s)
\end{displaymath}
and
\begin{displaymath}
V_T :=
\frac{1}{T}
\int_{0}^{T}(X(s) - b_0(s,X(s)))^2ds.
\end{displaymath}
On the one hand, let us show that $\mathbb E(U_{T}^{2})\rightarrow 0$ as $T\rightarrow\infty$. By Lemma \ref{control_divergence_integral}, and since $b_0$, $\partial_2b_0$, $\sigma$ and $s\in\mathbb R\mapsto\mathbb E(X(s)^2)$ are bounded under Assumptions \ref{conditions_b_sigma+} and \ref{assumption_b_0},
\begin{eqnarray*}
 \mathbb E(U_{T}^{2})
 & \leqslant &
 \frac{\mathfrak c_{H,\sigma,\underline\vartheta}}{T^2}\left[\left(\int_{0}^{T}\mathbb E(|(X(s) - b_0(s,X(s)))\sigma(s)|^{1/H})ds\right)^{2H}\right.\\
 & &
 +\left.
 \left(\int_{0}^{T}\mathbb E(|(1 -\partial_2b_0(s,X(s)))\sigma(s)|^2)^{1/(2H)}ds\right)^{2H}\right]\\
 & \leqslant &
 \frac{\mathfrak c_{H,\sigma,\underline\vartheta}\|\sigma\|_{\infty}^{2}}{T^{2 - 2H}}
 \left[(1 +\mathfrak m_b)^2\sup_{s\in\mathbb R}\mathbb E((1 + |X(s)|)^2) +
 (1 + (1 -\underline{\mathfrak m}_{b_0})\vee\overline{\mathfrak m}_{b_0})^2
 \right]\\
 & &
 \quad\quad
 \xrightarrow[T\rightarrow\infty]{} 0.
\end{eqnarray*}
On the other hand, by Lemma \ref{mean_value_periodic},
\begin{displaymath}
V_T\xrightarrow[T\rightarrow\infty]{\mathbb L^2}
\frac{1}{\tau}\int_{0}^{\tau}\mathbb E((X(s) - b_0(s,X(s)))^2)ds > 0.
\end{displaymath}
Therefore, by Slutsky's lemma,
\begin{displaymath}
\widehat\vartheta_T
\xrightarrow[T\rightarrow\infty]{\mathbb P}\vartheta.
\end{displaymath}
\end{proof}
%


%
\subsection{Consistency of $\widehat\vartheta_T$: almost periodic case}
The following lemma is a mean value theorem for the elements of
$\textrm{AP}^1(\Omega;\mathbb R)$. Since this lemma provides a
convergence result in $\mathbb L^1(\Omega;\mathbb R)$, for the
processes of $\textrm{Per}_{\tau}(\Omega;\mathbb R)$ with $\tau > 0$,
its conclusion is slightly weaker than that of
Lemma \ref{mean_value_periodic} which provides the same convergence
result in
$\mathbb L^2(\Omega;\mathbb R)$.
%


%
\begin{lemma}\label{mean_value_almost_periodic}
For every $Y\in\normalfont{\textrm{AP}^{1}}(\Omega;\mathbb R)$, the mean value $\mathcal M(m_Y)$ of its mean function $m_Y : s\mapsto\mathbb E(Y(s))$ exists and
\begin{displaymath}
\frac{1}{t}\int_{0}^{t}Y(s,.)ds
\xrightarrow[t\rightarrow\infty]{\normalfont{\mathbb L^1}}
\mathcal{M}(m_Y).
\end{displaymath}
\end{lemma}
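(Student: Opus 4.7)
The plan is, first, to reduce the claim to a classical mean-value statement about the deterministic mean function $m_Y$, and then to handle the random fluctuations by exploiting the ergodicity of $\theta$.

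As a preliminary step, I would show that $m_Y$ is classically almost periodic in Bohr's sense. For any $\theta$-$\varepsilon$-almost period $\tau$ of $Y$ in $1$-mean, Jensen's inequality together with the measure-preserving property of $\theta_{-\tau}$ yield
$$|m_Y(s+\tau)-m_Y(s)| = |\mathbb{E}(Y(s+\tau,\theta_{-\tau}\cdot) - Y(s,\cdot))| \leq d_1(\mathfrak{T}_\tau Y(s), Y(s)) \leq \varepsilon,$$
so the relatively dense set of $\theta$-$\varepsilon$-almost periods of $Y$ is contained in the set of classical $\varepsilon$-almost periods of $m_Y$. Combined with continuity of $m_Y$ (inherited from continuity of $Y$ in $d_1$), this makes $m_Y$ almost periodic. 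Proposition~\ref{mean_value_ap_functions} then yields existence of $\mathcal{M}(m_Y)$ and the classical convergence $\frac{1}{t}\int_0^t m_Y(s)\,ds \to \mathcal{M}(m_Y)$. Writing
$$\frac{1}{t}\int_0^t Y(s,\cdot)\,ds - \mathcal{M}(m_Y) = R_t + \Bigl(\frac{1}{t}\int_0^t m_Y(s)\,ds - \mathcal{M}(m_Y)\Bigr),$$
with $R_t := \frac{1}{t}\int_0^t (Y(s,\cdot) - m_Y(s))\,ds$, reduces the lemma to showing $\mathbb{E}|R_t|\to 0$.

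To control $R_t$, I would imitate the change of variables used in Lemma~\ref{mean_value_periodic}, but with an approximate rather than exact period. Given $\varepsilon > 0$, choose a $\theta$-$\varepsilon$-almost period $\tau = \tau(\varepsilon)$ of $Y$, write $t = n\tau + r_n$ with $r_n \in [0,\tau)$, and use
$$\frac{1}{n\tau}\int_0^{n\tau} Y(s,\omega)\,ds = \frac{1}{\tau}\int_0^\tau \frac{1}{n}\sum_{k=0}^{n-1} Y(s + k\tau, \omega)\,ds.$$
Measure preservation applied iteratively gives the bound $d_1(Y(s+k\tau,\cdot), Y(s,\theta_{k\tau}\cdot)) \leq k\varepsilon$, so replacing each $Y(s+k\tau)$ by $Y(s,\theta_{k\tau}\cdot)$ inside the Ces\`aro sum costs at most $\frac{n-1}{2}\varepsilon$ in $d_1$. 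Birkhoff's ergodic theorem applied to the (ergodic) time-$\tau$ transformation $\theta_\tau$ of the Wiener shift system then yields $\frac{1}{n}\sum_{k=0}^{n-1} Y(s,\theta_{k\tau}\cdot) \to m_Y(s)$ in $\mathbb{L}^1$ for each $s$, and the uniform $\mathbb{L}^1$-boundedness of $Y$ lets me interchange the limit with the $s$-integral, recovering $\frac{1}{\tau}\int_0^\tau m_Y(s)\,ds$, which is close to $\mathcal{M}(m_Y)$ when $\tau$ is chosen large within the almost-period set.

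The main obstacle is the linear-in-$n$ growth of the iterated almost-period error. To absorb it, one takes $\varepsilon = \varepsilon_t \downarrow 0$ and $\tau(\varepsilon_t)$ jointly with $t\to\infty$ so that $n_t := \lfloor t/\tau(\varepsilon_t)\rfloor \to \infty$ while $n_t \varepsilon_t \to 0$; the relative density of the almost-period set provides enough flexibility to realise both conditions via a diagonal argument, with the small leftover $r_n \in [0,\tau(\varepsilon_t))$ handled by the uniform $\mathbb{L}^1$-bound on $Y$. A possibly cleaner alternative, if available from the companion paper \cite{RF20}, is a Bochner--Fejer-type density result stating that finite sums of $\theta$-periodic processes are dense in $\textrm{AP}^1(\Omega;\mathbb{R})$; one would then apply Lemma~\ref{mean_value_periodic} to each summand and propagate the approximation error through the $\mathbb{L}^1$ norm.
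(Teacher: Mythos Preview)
Your overall architecture matches the paper's proof almost exactly: reduce to the centred remainder $R_t$, break $[0,n\tau]$ into blocks of length $\tau$ (a $\theta$-almost period), compare the true block integrals with the Birkhoff averages $M_n^\tau(s)=\frac1n\sum_{k}Y(s,\theta_{k\tau}\cdot)$, and invoke the ergodicity of the Wiener shift. The one substantive divergence is at the block-comparison step. The paper simply asserts
\[
\frac{1}{n\tau}\sum_{k=0}^{n-1}\int_0^\tau
\mathbb E\bigl(|Y(s+k\tau,\cdot)-Y(s,\theta_{k\tau}\cdot)|\bigr)\,ds\leqslant\frac{\varepsilon}{3},
\]
i.e.\ it treats each term $\mathbb E|Y(s+k\tau,\cdot)-Y(s,\theta_{k\tau}\cdot)|=d_1(\mathfrak T_{k\tau}Y(s),Y(s))$ as bounded by $\varepsilon/3$ \emph{uniformly in $k$}. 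With that bound the three pieces \eqref{eq:i1}--\eqref{eq:i2}--\eqref{eq:i3} combine and the proof closes with $\tau$ and $\varepsilon$ held fixed.

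You, more carefully, observe that a single $\theta$-$\varepsilon$-almost period only yields the iterated estimate $d_1(\mathfrak T_{k\tau}Y(s),Y(s))\leqslant k\varepsilon$, and then try to absorb the resulting $O(n\varepsilon)$ error by a diagonal scheme $\varepsilon_t\downarrow0$, $\tau_t=\tau(\varepsilon_t)$. This is where your argument breaks. Once $\tau$ is allowed to vary with $t$, you lose control of the Birkhoff term: the $\mathbb L^1$-rate at which $\frac1\tau\int_0^\tau M_n^\tau(s)\,ds$ approaches $\frac1\tau\int_0^\tau m_Y(s)\,ds$ depends on $\tau$, and nothing rules out that the threshold $N_\tau$ grows faster than any function of $\varepsilon$. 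Concretely, the constraints $n_t\geqslant N_{\tau_t}$ and $n_t\varepsilon_t\to0$ need not be simultaneously satisfiable. So the diagonal route, as sketched, does not close the gap; your Bochner--Fej\'er alternative (density of finite sums of $\theta$-periodic processes in $\textrm{AP}^1$, then Lemma~\ref{mean_value_periodic} termwise) is the more robust repair, but it needs that density result as an external input rather than a one-line remark.
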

%


%
\begin{proof}
Let $Y\in\normalfont{\textrm{AP}^{1}}(\Omega;\mathbb R)$.
Since $m_Y$ is an almost periodic function, 
its mean value $\mathcal{M}(m_Y)$ exists by Proposition \ref{mean_value_ap_functions}. Let $\varepsilon>0$, and let $T_{\varepsilon/3}$ denote the set of
$\theta$-$\frac{\varepsilon}{3}$-periods of $Y$.
Since $T_{\varepsilon/3}$ is relatively dense, we can
choose $\tau\in T_{\varepsilon/3}$ such that 
\begin{equation}\label{eq:i1}
  \left|\frac{1}{\tau}\int_0^\tau\mathbb{E}(Y(s))ds
  - \mathcal{M}(m_Y)\right|\leqslant\frac{\varepsilon}{3}.
\end{equation}
Let us denote, for $n\in\mathbb{N}^*$ and $s\in\mathbb R_+$,
\[
M_{n}^{\tau}(s,.)=\frac{1}{n}\sum_{k=0}^{n-1}Y(s,\theta_{k\tau}.). 
\]
Since $(\Omega,\mathcal{A},\mathbb{P},\theta)$ is
an ergodic metric dynamical system (see Maslowski and Schmalfuss \cite{MS04}),
we deduce by Birkhoff's theorem
\begin{displaymath}
M_{n}^{\tau}(s)
\xrightarrow[n\rightarrow\infty]{\textrm{a.s.}/\mathbb L^1}
\mathbb{E}(Y(s))
\textrm{ $;$ }
\forall s\in\mathbb R_+.
\end{displaymath}
Using the uniform continuity on $[0,\tau]$ of $s\mapsto M_n(s)$ in
$\mathbb{L}^1$, we deduce 
\begin{equation*}
  \left|
    \frac{1}{\tau}\int_0^\tau M_n(s)ds
    - \frac{1}{\tau}\int_0^\tau \mathbb{E}(Y(s))ds
    \right|
 \xrightarrow[n\rightarrow\infty]{\mathbb L^1} 0. 
\end{equation*}
In particular, there exists $N\in\mathbb N$ large enough such that
\begin{equation}\label{eq:i2}
  \mathbb{E}\left(\left|
    \frac{1}{\tau}\int_0^\tau M_n(s)ds
    - \frac{1}{\tau}\int_0^\tau \mathbb{E}(Y(s))ds
    \right|\right)\leqslant\frac{\varepsilon}{3};\quad \forall n\geqslant N.
\end{equation}
On the other hand, we have 
\begin{multline}\label{eq:i3}
 \mathbb{E}\left(\left|
 \frac{1}{n\tau}\int_{0}^{n\tau}Y(s,.)ds-\frac{1}{\tau}\int_0^\tau M_n(s,.)ds
 \right|\right)\\
\begin{aligned}
=& \frac{1}{n\tau} \mathbb{E}\left(\left|
\sum_{k=0}^{n-1}\int_{k\tau}^{(k+1)\tau}Y(s)ds
-\sum_{k=0}^{n-1}\int_0^\tau Y(s,\theta_{k\tau}.)
\right|\right)\\
\leqslant &\frac{1}{n\tau} \sum_{k=0}^{n-1}\int_0^\tau
\mathbb{E}(|Y(s+k\tau,.)-Y(s,\theta_{k\tau}.)|)ds
\leqslant \frac{\varepsilon}{3}.
\end{aligned}
\end{multline}
From \eqref{eq:i1}-\eqref{eq:i2}-\eqref{eq:i3}, we deduce that
\begin{equation*}
   \mathbb{E}\left(\left|
     \frac{1}{n\tau}\int_{0}^{n\tau}Y(s)ds-\mathcal{M}(m_Y)
                  \right|\right)\leqslant\varepsilon; \quad\forall n\geqslant N. 
\end{equation*}
To conclude the proof, we only need to notice that,         
for $t=n\tau+r$, with $0\leqslant r<\tau$, we have,
since $s\mapsto \mathbb{E}(Y(s,.))$ is bounded,
\begin{multline*}
   \mathbb{E}\left(\left|
     \frac{1}{t}\int_{0}^{t}Y(s)ds-\frac{1}{n\tau}\int_{0}^{n\tau}Y(s)ds
              \right|\right)\\
            \leqslant\left(1 -\frac{n\tau}{n\tau+r}\right)
            \mathbb{E}\left(\left|
              \frac{1}{n\tau}\int_{0}^{n\tau}Y(s)ds
              \right|\right)
              +\frac{1}{n\tau+r}\int_{0}^{\tau}\mathbb E(|Y(n\tau + s)|)ds\\
      \xrightarrow[n\rightarrow\infty]{}0\text{ uniformly with respect
        to $r$}.        
\end{multline*}
\end{proof}
%


%
\begin{proposition}\label{consistency_LS_estimator_ap}
Under Assumptions \ref{conditions_b_sigma} and \ref{assumption_b_0}, $\widehat\vartheta_T$ is a consistent estimator of $\vartheta$.
\end{proposition}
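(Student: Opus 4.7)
The plan is to follow exactly the strategy of the proof of Proposition \ref{consistency_LS_estimator_periodic}: write $\widehat\vartheta_T - \vartheta = -U_T/V_T$ with
\[
U_T := \frac{1}{T}\int_{0}^{T}(X(s) - b_0(s,X(s)))\sigma(s)\delta B(s),\quad V_T := \frac{1}{T}\int_{0}^{T}(X(s) - b_0(s,X(s)))^2 ds,
\]
show $U_T \to 0$ in $\mathbb L^2$ and $V_T \to \ell > 0$ in $\mathbb L^1$, then conclude by Slutsky's lemma. The control of $U_T$ carries over verbatim from the periodic case: Lemma \ref{control_divergence_integral} is stated under Assumption \ref{conditions_b_sigma}, and $\sup_{s\in\mathbb R}\mathbb E(X(s)^2) <\infty$ is provided by Theorem \ref{almost_periodic_solutions}. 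So all the new work concentrates on $V_T$.

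I would apply Lemma \ref{mean_value_almost_periodic} to the process $Y(s) := (X(s) - b_0(s,X(s)))^2$, which first requires verifying $Y \in \textrm{AP}^1(\Omega;\mathbb R)$. Uniform $\mathbb L^1$-boundedness of $Y$ is immediate from the uniform $\mathbb L^2$-boundedness of $X$ (Theorem \ref{almost_periodic_solutions}) and the linear growth of $b_0$. The relative denseness of the $\theta$-$\varepsilon$-almost periods of $Y$ in $1$-mean is obtained by a compactness-and-truncation argument mimicking Steps 1 and 2 of the proof of Theorem \ref{almost_periodic_solutions}: use Proposition \ref{compactness_ap_processes} to trap $X(s+\tau,\theta_{-\tau}\cdot)$ in a compact set $K_{\alpha}\subset\mathbb R$ with probability $1-\eta$, exploit the uniform continuity of $(t,x)\mapsto (x - b_0(t,x))^2$ on $\mathbb R\times K_\alpha$ together with the almost periodicity of $b_0(\cdot,x)$ uniform in $x\in K_\alpha$ on this good event, and bound the complementary contribution by the uniform $\mathbb L^2$-integrability of $X$. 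Continuity of $\tau\mapsto\mathfrak T_\tau Y(t)$ for $d_1$ is handled analogously to Step 3 of the same proof.

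Once $Y \in \textrm{AP}^1(\Omega;\mathbb R)$ is established, Lemma \ref{mean_value_almost_periodic} yields $V_T \to \ell := \mathcal M(m_Y)$ in $\mathbb L^1$, where $m_Y(s) := \mathbb E(Y(s))$. The positivity $\ell > 0$ rests on the classical fact that a nonnegative continuous almost periodic function with zero mean value must vanish identically, proved by contradiction using relative denseness of almost periods. Applied to $m_Y$, this would force $X(s) = b_0(s,X(s))$ almost surely for every $s$; but the derivative of $x\mapsto x - b_0(s,x)$ is bounded below by $\underline{\mathfrak m}_{b_0} > 0$ (Assumption \ref{assumption_b_0}), so its zero set is at most a singleton, which is incompatible with the nondegenerate Gaussian contribution $\int_{-\infty}^s e^{-\vartheta(s-u)}\sigma(u)dB(u)$ entering $X(s)$. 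Slutsky's lemma then gives $\widehat\vartheta_T \xrightarrow{\mathbb P} \vartheta$. The main obstacle is the verification $Y \in \textrm{AP}^1(\Omega;\mathbb R)$: unlike the periodic case, where periodicity of the drift and diffusion transmits directly to $Y$, here one has to re-run the compactness-truncation machinery of Theorem \ref{almost_periodic_solutions} at the level of the functional $(X(s) - b_0(s,X(s)))^2$; the strict positivity of $\ell$ is a secondary but nontrivial point.
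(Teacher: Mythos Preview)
Your proposal is correct and follows the same overall architecture as the paper's proof: the same decomposition $\widehat\vartheta_T=\vartheta-U_T/V_T$, the same $\mathbb L^2$-control of $U_T$ via Lemma~\ref{control_divergence_integral}, the application of Lemma~\ref{mean_value_almost_periodic} to $Y(s)=(X(s)-b_0(s,X(s)))^2$, and Slutsky's lemma to conclude.

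The two places where you diverge from the paper are both at the level of tools rather than strategy. First, to establish $Y\in\textrm{AP}^1(\Omega;\mathbb R)$ you propose to rerun the compactness--truncation machinery of Steps~1--3 of Theorem~\ref{almost_periodic_solutions}; the paper instead invokes Bochner's double sequence criterion (\cite[Theorem 3.12]{RF20}) once, which dispatches the matter in one line since $X\in\textrm{AP}^2(\Omega;\mathbb R)$ and $b_0(\cdot,x)$ is almost periodic uniformly on compacts. Your route works but is longer; the paper's is cleaner though it imports an external characterisation. Second, for the strict positivity of the limit $\ell=\mathcal M(m_Y)$, the paper takes $\mu_Y:=\sqrt{m_Y}$, observes it is almost periodic (again by Bochner), and uses Parseval's equality (Proposition~\ref{Parseval_equality_ap_functions}) to deduce that $\mathcal M(\mu_Y^2)=0$ forces $\mu_Y\equiv 0$, whence $X(\cdot)=b_0(\cdot,X(\cdot))$ almost everywhere. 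Your argument---a nonnegative almost periodic function with zero mean vanishes identically, and then $X(s)$ would be the unique zero of the strictly increasing map $x\mapsto x-b_0(s,x)$, contradicting the nondegenerate Gaussian part of $X(s)$---is more elementary and in fact more explicit than the paper, which stops at ``$X(\cdot)=b_0(\cdot,X(\cdot))$ almost everywhere'' without spelling out why this is impossible.
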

%


%
\begin{proof}
As established in the proof of Proposition \ref{consistency_LS_estimator_periodic}, $\widehat\vartheta_T =\vartheta - U_T/V_T$ where
\begin{displaymath}
U_T =
\frac{1}{T}
\int_{0}^{T}(X(s) - b_0(s,X(s)))\sigma(s)\delta B(s)
\xrightarrow[T\rightarrow\infty]{\mathbb L^2} 0,
\end{displaymath}
and
\begin{displaymath}
V_T =
\frac{1}{T}
\int_{0}^{T}Y(s)ds
\end{displaymath}
with
\begin{displaymath}
Y(s) := (X(s) - b_0(s,X(s)))^2
\textrm{ $;$ }
\forall s\in\mathbb R.
\end{displaymath}
Since $X\in\textrm{AP}^2(\Omega;\mathbb R)$ by Theorem
\ref{almost_periodic_solutions} and the functions $b_0(.,x)$,
$x\in\mathbb R$ are almost periodic, $Y\in\textrm{AP}^1(\Omega;\mathbb
R)$ by Bochner's double sequence criterion
(see \cite[Theorem 3.12]{RF20}).
Then, by Lemma \ref{mean_value_almost_periodic}, 
\begin{displaymath}
V_T\xrightarrow[T\rightarrow\infty]{\mathbb L^1}
\mathcal M(\mu_{Y}^{2})
\end{displaymath}
where $\mu_Y$ is the square root of the mean function $m_Y$ of
$Y$. Since $m_Y$ is almost periodic, $\mu_Y$ is also this is also the
case,
by Bochner's double sequence criterion. Then, by Parseval's equality (see Proposition \ref{Parseval_equality_ap_functions}), for any sequence $(\lambda_n)_{n\in\mathbb N^*}$ of elements of $\mathbb S(\mu_Y)$,
\begin{displaymath}
\mathcal M(\mu_{Y}^{2}) =
\sum_{n = 1}^{\infty}
|\mathcal M(\mu_Ye^{i\lambda_n.})|^2.
\end{displaymath}
So, $\mathcal M(\mu_{Y}^{2}) > 0$ because if $\mathcal M(\mu_{Y}^{2}) = 0$, then $X(.) = b_0(.,X(.))$ almost everywhere. Therefore, by Slutsky's lemma,
\begin{displaymath}
\widehat\vartheta_T
\xrightarrow[T\rightarrow\infty]{\mathbb P}\vartheta.
\end{displaymath}
\end{proof}

\noindent
\textbf{Acknowledgments.}
We thank the reviewer for his valuable comments which helped improve
the readability of this article. 
This work was funded by RFBR and CNRS, project number PRC2767. 

%

%

\begin{thebibliography}{99}
 \bibitem{andres-bersani}
 J.~{Andres}, A.~M. {Bersani} and R.~F. {Grande}.
 \newblock {Hierarchy of Almost-Periodic Function Spaces.}
 \newblock {\em {Rend. Mat. Appl. VII. Ser.}} 26, 2, 121-188, 2006.

 \bibitem{ARNOLD98} L. Arnold. \textit{Random Dynamical Systems.} Springer Monographs in Mathematics, Springer, 1998.

 \bibitem{BEV17} S. Bajja, K. Es-Sebaiy and L. Viitasaari. Least Squares Estimator of Fractional Ornstein-Uhlenbeck Processes with Periodic Mean. {\em Journal of the Korean Statistical Society} 46, 4, 608-622, 2017.

 \bibitem{almostautomorphy}
 F. {Bedouhene}, N. {Challali}, O. {Mellah}, P. {Raynaud de Fitte} and M. {Smaali}.
 \newblock {Almost Automorphy and Various Extensions for Stochastic Processes.}
 \newblock {\em {J. Math. Anal. Appl.}} 429, 2, 1113-1152, 2015.

 \bibitem{CM19} F. Comte and N. Marie. Nonparametric Estimator in Fractional SDE. {\em Stat. Inference Stoch. Process} 22, 3, 359-382, 2019.
 
 \bibitem{CM20} F. Comte and N. Marie. Nonparametric Estimation for I.I.D. Paths of Fractional SDE. {\em arXiv e-prints}, 2020.
   
 \bibitem{BMRF}
 F.~Bedouhene, O.~Mellah and P.~Raynaud~de Fitte.
 \newblock Bochner-Almost Periodicity for Stochastic Processes.
 \newblock {\em Stoch. Anal. Appl.} 30, 2, 322-342, 2012.

 \bibitem{cheridito}
 P. {Cheridito}, H. {Kawaguchi}, and M. {Maejima}.
 \newblock Fractional {O}rnstein-{U}hlenbeck Processes.
 \newblock {\em {Electron. J. Probab.}} 8, 3, 1-14, 2003.

 \bibitem{corduneanu} C.~Corduneanu.
 \newblock {\em Almost Periodic Functions}.
 \newblock New York: Chelsea Publishing Company, 2nd engl. ed. edition, 1989.    

 \bibitem{DaPrato-Tudor95}
 G.~Da~Prato and C.~Tudor.
 \newblock Periodic and Almost Periodic Solutions for Semilinear Stochastic Equations.
 \newblock {\em Stochastic Anal. Appl.} 13, 1, 13-33, 1995.

 \bibitem{DFW17} H. Dehling, B. Franke and J.H.C. Woerner. Estimating Drift Parameters in a Fractional Ornstein-Uhlenbeck Process with Periodic Mean. {\em Stat. Inference Stoch. Process} 20, 1, 1-14, 2017.

 \bibitem{FV10} P. Friz and N. Victoir. \textit{Multidimensional Stochastic Processes as Rough Paths: Theory and Applications.} Cambridge Studies in Applied Mathematics 120, Cambridge University Press, 2010.

 \bibitem{halanay87}
 A.~Halanay.
 \newblock Periodic and Almost Periodic Solutions to Affine Stochastic Systems.
 \newblock In {\em Proceedings of the {E}leventh {I}nternational {C}onference on
 {N}onlinear {O}scillations ({B}udapest, 1987)}, pages 94-101, Budapest,
 1987. J\'anos Bolyai Math. Soc.
  
 \bibitem{HN10} Y. Hu and D. Nualart. Parameter Estimation for Fractional Ornstein-Uhlenbeck Processes. {\em Statistics and Probability Letters} 80, 1030-1038, 2010.

 \bibitem{HNZ18} Y. Hu, D. Nualart and H. Zhou. Drift Parameter Estimation for Nonlinear Stochastic Differential Equations Driven by Fractional Brownian Motion. {\em Stochastics} 91, 8, 1067-1091, 2019.

 \bibitem{HC78} S.T. Huang and S. Cambanis. Stochastic and Multiple Wiener Integrals for Gaussian Processes. {\em The Annals of Probability} 6, 4, 585-614, 1978.
 
 \bibitem{KB01} M.L. Kleptsyna and A. Le Breton. Some Explicit Statistical Results about Elementary Fractional Type Models. {\em Nonlinear Analysis} 47, 4783-4794, 2001.

 \bibitem{LZ82} B.M. Levitan and V.V. Zhikov. \textit{Almost Periodic Functions and Differential Equations.} Cambridge University Press, Cambridge-New York, 1982.
 
 \bibitem{MS04} B. Maslowski and B. Schmalfuss. Random Dynamical Systems and Stationary Solutions of Differential Equations Driven by the Fractional Brownian Motion. {\em Stoch. Anal. Appl.} 22, 1577-1607, 2004.

 \bibitem{counterexamples}
 O. {Mellah} and P. {Raynaud de Fitte}.
 \newblock {Counterexamples to Mean Square Almost Periodicity of the Solutions of some SDEs with Almost Periodic Coefficients.}
 \newblock {\em {Electron. J. Differ. Equ.}} 91, 1-7, 2013.

 \bibitem{MMV01} J. M\'emin, Y. Mishura and E. Valkeila. Inequalities for the Moments of Wiener Integrals with Respect to a Fractional Brownian Motion. {\em Statistics and Probability Letters} 51, 197-206, 2001.

 \bibitem{Morozan-Tudor89}
 T.~Morozan and C.~Tudor.
 \newblock Almost Periodic Solutions to Affine It\^o Equations.
 \newblock {\em Stoch. Anal. Appl.} 7, 4, 451-474, 1989.

 \bibitem{NT14} A. Neuenkirch and S. Tindel. A Least Square-Type Procedure for Parameter Estimation in Stochastic Differential Equations with Additive Fractional Noise. {\em Stat. Inference Stoch. Process} 17, 1, 99-120, 2014.
 
 \bibitem{NUALART06} D. Nualart. \textit{The Malliavin Calculus and Related Topics.} Springer, 2006.
 
 \bibitem{PT00} V. Pipiras and M. Taqqu. Integration Questions Related to Fractional Brownian Motion. \textit{Probab. Theory and Relat. Fields} 118, 251-291, 2000.

 \bibitem{RF20} P.~Raynaud de Fitte. Almost Periodicity and Periodicity for Nonautonomous Random Dynamical Systems. HAL-02444923, 2019.

 \bibitem{Tudor92affine}
 C.~Tudor.
 \newblock Almost Periodic Solutions of Affine Stochastic Evolution Equations.
 \newblock {\em Stochastics} 38, 4, 251-266, 1992.

 \bibitem{Tudor92flows}
 C.~{Tudor}.
 \newblock {Periodic and Almost Periodic Flows of Periodic It\^o Equations.}
 \newblock {\em {Math. Bohem.}} 117, 3, 225-238, 1992.

 \bibitem{Tudor95ap_processes}
 C.~Tudor.
 \newblock Almost Periodic Stochastic Processes.
 \newblock In {\em Qualitative problems for differential equations and control
  theory}, pages 289-300. World Sci. Publ., River Edge, NJ, 1995.
 
 \bibitem{TV07} C.A. Tudor and F. Viens. Statistical Aspects of the Fractional Stochastic Calculus. {\em The Annals of Statistics} 35, 3, 1183-1212, 2007.

 \bibitem{zhang-zheng19ap}
 W.~{Zhang} and Z.-H. {Zheng}.
 \newblock {Random Almost Periodic Solutions of Random Dynamical Systems}.
 \newblock {\em arXiv e-prints}, 2019.
\end{thebibliography}
\end{document}